\pgfplotsset{width=9cm,compat=1.5.1}
\theoremstyle{plain}
\newtheorem{thm}{Theorem}[section]
\newtheorem{theorem}[thm]{Theorem} 
\newtheorem{lemma}[thm]{Lemma} 
\newtheorem{proposition}[thm]{Proposition}
\newtheorem{conjecture}[thm]{Conjecture}
\newtheorem{corollary}[thm]{Corollary}
\theoremstyle{definition}
\newtheorem{example}[thm]{Example}
\newtheorem{definition}[thm]{Definition}
\newtheorem{remark}[thm]{Remark}
\newcommand{\cT}{\mathcal T}
\DeclarePairedDelimiter{\ceil}{\lceil}{\rceil}
\DeclareMathOperator{\diam}{diam}
\DeclareMathOperator{\dist}{dist}
\DeclareMathOperator{\Kf}{Kf}
\title{Bounds on Kemeny's constant of a graph and the Nordhaus-Gaddum problem}
\author{
Sooyeong Kim\textsuperscript{1}\footnote{Contact: kimswim@yorku.ca} \and
Neal Madras\textsuperscript{1} \and
Ada Chan\textsuperscript{1} \and
Mark Kempton\textsuperscript{2} \and 
Stephen Kirkland\textsuperscript{3} \and
Adam Knudson\textsuperscript{2}
}
\date{\today}
\begin{document}

\maketitle

\begin{abstract}
    We study Nordhaus-Gaddum problems for Kemeny's constant $\mathcal{K}(G)$ of a connected graph $G$.  We prove bounds on $\min\{\mathcal{K}(G),\mathcal{K}(\overline{G})\}$ and the product $\mathcal{K}(G)\mathcal{K}(\overline{G})$ for various families of graphs.  In particular, we show that if the maximum degree of a graph $G$ on $n$ vertices is $n-O(1)$ or $n-\Omega(n)$, then $\min\{\mathcal{K}(G),\mathcal{K}(\overline{G})\}$ is at most $O(n)$.
\end{abstract}

\noindent {\bf Keywords:} Graph, Markov chain, random walk, Kemeny's constant, spanning 2-forest, mean first passage time

\noindent \textbf{AMS subject classifications:} 05C09, 60J10, 05C81, 05C50, 05A19 

\addtocounter{footnote}{1}
\footnotetext{Department of Mathematics and Statistics, York University, 4700 Keele Street, Toronto, Ontario, Canada}
\addtocounter{footnote}{1}
\footnotetext{Department of Mathematics, Brigham Young University, Provo UT, USA}
\addtocounter{footnote}{1}
\footnotetext{Department of Mathematics, University of Manitoba, Winnipeg, Manitoba, Canada}

\section{Introduction}


Kemeny's constant is an important measure from the theory of Markov chains that has received considerable interest from the graph theory community recently. It also arises as a tool in applications of Markov chains in diverse areas such as wireless network design \cite{GhaL} and  economics \cite{MooI}. 
Kemeny's constant is originally defined for a discrete, finite, time-homogeneous, irreducible Markov chain based on its stationary vector and mean first passage times. Random walks on graphs belong to this special family of Markov chains, and they serve as the primary focus in this article. Consequently, our attention focuses on Kemeny's constant within the context of random walks on graphs, and we refer the reader to \cite{kemeny1960finite} for the original definition and details.  Kemeny's constant gives a measure of how quickly a random walker can move around a graph, and  thus provides an intuitive measure of the connectivity of a graph.  

Let $G$ be a graph with vertex set $V(G)$ and edge set $E(G)$. In this article, we assume all graphs to be connected, undirected, and unweighted unless stated otherwise.  Let $V(G)=\{1,\dots,n\}$ for some $n\geq 1$, and $m=|E(G)|$. We denote by $\{i,j\}$ the edge joining vertices $i$ and $j$, and say that $i$ and $j$ are adjacent. We denote by $d_i$ the degree of vertex $i$; that is, the number of vertices adjacent to vertex $i$.

We begin by introducing Kemeny's constant and discussing its interpretation in the context of random walks on graphs, and discuss several other expressions for Kemeny's constant which will be used throughout the paper. Given a graph $G$ and fixed initial vertex $i\in V(G)$ from which to start the random walk, Kemeny's constant $\mathcal{K}(G)$ is defined as
\begin{align}\label{eq:def}
	\mathcal{K}(G) = \sum_{\substack{j=1 \\ j\neq i}}^n \left(\frac{d_j}{2m}\right)m_{i,j},
\end{align}
where $m_{i,j}$ is the expected time for a random walker to arrive at $j$ for the first time when it begins at $i$, which is the so-called \textit{mean first passage time} or \textit{hitting time} from $i$ to $j$. This expression can be interpreted in terms of the expected time for a random walk to reach a randomly-chosen destination vertex, starting from vertex $i$. The `constant' in the name comes from the fact that this expression is, astonishingly, independent of the choice of $i$, giving the same value regardless of the fixed start point. We note that $\sum_i \frac{d_i}{2m} = 1$, and $\frac{d_i}{2m}$ is the stationary probability distribution of the random walk. Furthermore, it follows that \eqref{eq:def} may be rewritten as
\begin{align*}
	\mathcal{K}(G) = \sum_{i=1}^n\sum_{\substack{j=1\\j\neq i}}^n\left(\frac{d_i}{2m}\right)m_{i,j}\left(\frac{d_j}{2m}\right).
\end{align*}
Hence, Kemeny's constant may also be interpreted in terms of the average travel time between two vertices of the graph, chosen at random according to their degrees.



It is shown in \cite{levene2002kemeny} that Kemeny's constant for a Markov chain can be written in terms of the eigenvalues of the transition matrix; specifically,
\begin{align}\label{eq:eig}
	\mathcal{K}(G) = \sum_{i=2}^n \frac{1}{1-\lambda_i},
\end{align}
where $1,\lambda_2,\dots,\lambda_n$ are the eigenvalues of the associated  transition matrix. In the particular case of a random walk on $G$, the  matrix $D^{-1}A$ is the transition matrix, where $D$ is the diagonal matrix of vertex degrees, and $A$ is the adjacency matrix of $G$ 
 (see Equation (\ref{eq.Pvw})).

For a random walk on a graph, Kemeny's constant also has a combinatorial expression which we use throughout this article. Let $\mathcal{F}(i;j)$ denote the set of spanning $2$-forests of $G$ where one component of the forest contains vertex $i$, and the other contains $j$. Let $F$ be the matrix given by $F = [f_{i,j}]$ where $f_{i,j} = |\mathcal{F}(i;j)|$. In \cite{kirkland2016kemeny}, Kemeny's constant of $G$ is given by
\begin{align*}
	\mathcal{K}(G) = \frac{\mathbf{d}^T F \mathbf{d}}{4m\tau} = \frac{1}{4m\tau}\sum_{i=1}^n\sum_{\substack{j=1\\j\neq i}}^n d_if_{i,j}d_j,
\end{align*}
where $\mathbf{d}$ is the degree vector of $G$ and $\tau$ is the number of spanning trees of $G$. Let $R$ be the matrix given by $R = [r_{i,j}]$, where $r_{i,j}$ is the \textit{effective resistance} (see \cite{bapat2010graphs}) between vertices $i$ and $j$. The quantity $r_{i,j}$ is given by $r_{i,j}=(e_i-e_j)^TL^\dagger (e_i-e_j)$ where $L^\dagger$ is the Moore-Penrose inverse of the Laplacian matrix $L$ of $G$, which is given by $L = D-A$. It appears in \cite{shapiro1987electrical} that $r_{i,j} = \frac{f_{i,j}}{\tau}$. Hence, we also have
\begin{align}\label{eq:res}
	\mathcal{K}(G) = \frac{\mathbf{d}^T R \mathbf{d}}{4m} = \frac{1}{4m}\sum_{i=1}^n\sum_{j=1}^n d_ir_{i,j}d_j.
\end{align}

In this paper, we will be concerned with Nordhaus-Gaddum questions relating to Kemeny's constant.  Nordhaus-Gaddum questions in graph theory are questions that address the relationship between a graph $G$ and its complement relative to some graph invariant. The complement of a graph $G$, denoted $\overline{G}$, is the graph with the same vertex set $V(G)$ such that $\{i,j\}\in E(\overline{G})$ if and only if $\{i,j\} \notin E(G)$. Given some graph invariant $f(\cdot)$, a Nordhaus-Gaddum question usually considers bounds on either the sum $f(G)+f(\overline{G})$ or the product $f(G)f(\overline{G})$, or on the minimum or maximum of the set $\{f(G), f(\overline{G})\}$. Nordhaus and Gaddum originally studied such questions for the chromatic number of a graph \cite{nordhaus1956complementary}.  Since then, Nordhaus-Gaddum questions have been studied for a wide range of different kinds of graph invariant; see \cite{aouchiche2013survey} for a survey of results of this kind.  Of particular note to the present work, Nordhaus-Gaddum questions relating to a graph invariant called the \emph{Kirchhoff index} of a graph were studied in \cite{zhou2008note,yang2011new}.  The Kirchhoff index $\Kf(G)$ is defined as
\[\Kf(G) =\tfrac{1}{2}\sum_{i=1}^n \sum_{j=1}^n r_{i,j}.
\]
In light of the formula in \eqref{eq:res}, we can view Kemeny's constant as a weighted, normalized version of the Kirchhoff index.  Work in \cite{yang2011new} gave upper and lower bounds on the sum and product of $\Kf(G)$ and $\Kf(\overline G)$. In particular, they show that for a graph on $n$ vertices, the product $\Kf(G) \Kf(\overline{G})$ is at least $4(n-1)^2$ and is at most on the order of $n^4$, though this upper bound is not shown to be tight.  The lower bound was improved to $(2n-1)^2$ in \cite{das2016nordhaus}.
In \cite{chen2007resistance}, the quantity \[\Kf^\ast(G) = \frac{1}{2}\sum_{i=1}^n\sum_{j=1}^n d_id_jr_{i,j}\] is introduced, and is called the \textit{multiplicative degree-Kirchhoff index}. This is closely related to Kemeny's constant via the formula in \eqref{eq:res}. A Nordhaus-Gaddum question for this quantity was considered (among other related quantities) in \cite{das2016nordhaus}; in particular, certain bounds are given on the sum $\Kf^\ast(G)+\Kf^\ast(\overline{G})$. In \cite{faught2023nordhaus}, Nordhaus-Gaddum questions were also studied for the spectrum of the normalized Laplacian matrix defined by $D^{-\frac{1}{2}}LD^{-\frac{1}{2}}$, which is closely related to Kemeny's constant via \eqref{eq:eig}.

For a connected graph $G$, it is proved in \cite{breen2019computing} that $\mathcal{K}(G) = O(n^3)$. Thus it follows trivially that $\mathcal{K}(G)+\mathcal{K}(\overline{G}) = O(n^3)$. Consequently, it is more natural to examine the product $\mathcal{K}(G)\mathcal{K}(\overline{G})$ for the Nordhaus-Gaddum problem in relation to Kemeny's constant. We note that the motivation for considering this type of problem is to further understand the influence of graph structural features on the value of Kemeny's constant, given that extremal graphs tend to have large diameter and maximum degree on the order of $n$. 


To begin our investigation of Nordhaus-Gaddum questions for Kemeny's constant, we examine some data.  In Figure \ref{fig:kemplotswithbounds}, we have plotted the point $(\mathcal{K}(G),\mathcal{K}(\overline{G}))$ for all graphs $G$ on $n$ vertices for $n=7,8,9$.  For disconnected graphs, we consider Kemeny's constant to be infinite and plot the point at the extreme ends of the plot.  

 \begin{figure}[h]
    \centering
    \includegraphics[clip, trim ={0 3cm 0 3cm}]{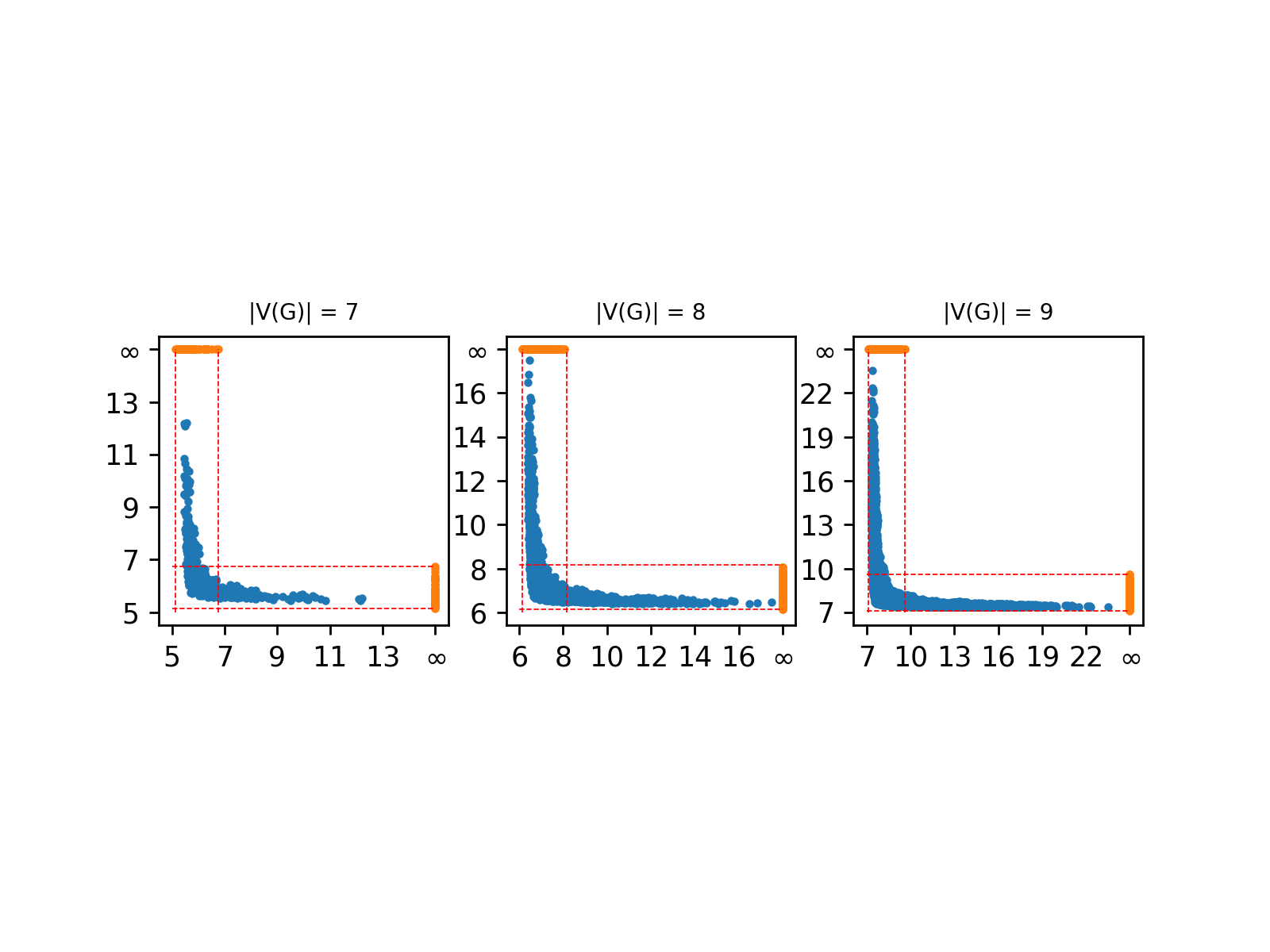}
    \caption{Scatter plots of $(\mathcal{K}(G), \mathcal{K}(\overline{G}))$ for all graphs of order $n = 7, 8, 9$. The red lines mark the quantities $\frac{(n-1)^2}{n}$ and $\frac{3(n-1)^2}{2(n+1)}$.}
    \label{fig:kemplotswithbounds}
\end{figure}

Looking at these plots, a few observations become apparent.  It seems that when Kemeny's constant for a graph is particularly large, then Kemeny's constant for its complement stays rather small.  In the plots, we have indicated the line $y=(n-1)^2/n$, $x=(n-1)^2/n$ which come from the known minimum value of Kemeny's constant for a graph of order $n$, achieved by a complete graph.  We also have the lines $y=3(n-1)^2/2(n+1)$ and $x=3(n-1)^2/2(n+1)$ which come from computation of Kemeny's constant for graphs that appear at the extreme ends of the plots.  For all graphs on up to $n=9$ vertices, it appears that all points lie in the regions bounded by these four lines.  While it is difficult to make a general conjecture based on looking at graphs on only up to nine vertices, it does seem natural to ask if one of $\mathcal{K}(G)$ or $\mathcal{K}(\overline{G})$ must be bounded above by $O(n)$.  If true, this would imply that $\mathcal{K}(G)\mathcal{K}(\overline{G})$ is $O(n^4)$.

In this paper, we prove this and related results under certain assumptions and for certain classes of graphs. In Section \ref{sec:n-O1} we prove that if the maximum degree of $G$ is $n-O(1)$, then $\mathcal{K}(G)$ is $\Theta(n)$ and hence $\mathcal{K}(G)\mathcal{K}(\overline{G})=O(n^4)$.  In Section \ref{sec:n-on}, we show that if the maximum degree of $G$ is at most $nU$ for some $U<1$, then 
\[\min\{\mathcal{K}(G),\mathcal{K}(\overline{G})\} \leq Cn
\] 
for some constant $C$ (possibly depending on $U$).  An important corollary is that the same inequality is true for all regular graphs, but now with $C$ independent of the degree.  In Section \ref{sec:join}, we examine joins of graphs (i.e. graphs whose complements are disconnected) and show that for any such graph $\mathcal{K}(G)\leq 3n$.   Finally, in Section \ref{sec:app}, we examine particular families of graphs, and determine more specific bounds on $\mathcal{K}(G)\mathcal{K}(\overline{G})$ for various families such as regular graphs, barbell graphs, trees, strongly regular graphs and certain distance regular graphs with classical parameters.  We end with some concluding remarks and a conjecture in Section \ref{sec:concl}.

\subsection{Notation and preliminaries}
     \label{sec:notation}
We introduce the following notation and definitions which will be used throughout the remainder of this article. Given a graph $G$, we let $\delta(G)$ denote the minimum degree of a vertex of $G$, and $\Delta(G)$ denote the maximum degree. We denote by $N(i)$ the \emph{neighbourhood} of $i$; i.e. the set of vertices adjacent to vertex $i$. The distance between two vertices $i$ and $j$, denoted $\dist(i,j)$, is defined as the length of the shortest path between vertices $i$ and $j$, while the \emph{diameter} of $G$ is the longest pairwise distance in the graph:
\[\diam(G) = \max_{i, j \in V(G)} \dist(i,j).\]

We use the following asymptotic notation. If $G_n$ represents a graph of order $n$ in a sequence or family of graphs, and $f$ is some positive-valued graph invariant which may be expressed as a function of $n$, then we write $f(G_n) = O(g(n))$ if $\limsup_{n\to\infty} \frac{f(G_n)}{g(n)}$ is finite, and understand this to mean that $f(G_n)$ has at most the same order of magnitude as $g(n)$. Furthermore, we write $f(G_n) = \Omega(g(n))$ if $g(n) = O(f(G_n))$, and $f(G_n) = \Theta(g(n))$ if $f(G_n) = O(g(n))$ and $f(G_n) = \Omega(g(n))$. We write $f(G_n) = o(g(n))$ if $\lim \frac{f(G_n)}{g(n)} = 0$, $f(G_n) = \omega(g(n))$ if $\lim \frac{f(G_n)}{g(n)} = \infty$, and $f(G_n) \sim g(n)$ if $\lim \frac{f(G_n)}{g(n)} = 1$. We usually omit the subscript $n$ when referring to a family of graphs of order $n$.

Some of our proofs are probabilistic, and we shall use the
following notation for the random variables corresponding to 
the random walk on $G$ (or, in modified form, to the random walk on 
$\overline{G}$).
We write the random walk Markov chain on $G$ as a sequence 
$X_0,X_1,X_2,\ldots$ of $V(G)$-valued random variables, with one-step transition probabilities 
given by $P(\cdot,\cdot)$ as follows:
\begin{equation}
    \label{eq.Pvw}
    P(v,w)  \;=\;   \Pr(X_1\,=\,w\,|\,X_0=v)   \;=\;
     \begin{cases}    (d_v)^{-1},   & \text{if }\{v,w\}\in E(G); \\   0, & \text{otherwise} .  \end{cases}
\end{equation}
More generally, for $t\in \mathbb{N}$, we shall write the $t$-step transition probabilities  
as $P^{(t)}(v,w)\,=\,\Pr(X_t=w\,|\,X_0=v)$.  
Assuming that $G$ is connected, the Markov chain has a unique stationary distribution, which 
we shall denote $\pi(\cdot)$.  It is well known that 
\[    \pi(v)   \;=\;  \frac{ {d_v}}{2m}    \hspace{5mm}\hbox{for every $v\in V(G)$, where $m=|E(G)|$.}
\]
For the Markov chain that starts at a specified vertex $v$, we write
$P_v(\cdot)$ to denote $P(\cdot |X_0=v)$ and 
${\mathbb E}_v(\cdot)$ to denote the expectation 
$\mathbb{E}(\cdot| X_0=v)$.

For a given vertex $j$, 
we define the random variable $T_j$ to be the 
\textit{first passage time} of the vertex $j$ by the 
random walk $\{X_t\}$; that is,
\[     T_j \;=\;  \min\{  t\geq 1\,:  \;  X_t=j \}  \,.
\]
Thus our mean first-passage time $m_{i,j}$ is $\mathbb{E}_i(T_j)$.
When $j=i$, we sometimes call $m_{ii}$ the mean return time to $i$.
It is a well known fact about Markov chains (e.g., Proposition 1.14(ii) of Levin, Peres, and Wilmer 2009) that
\begin{equation}
    \label{eq.piprop}
      \mathbb{E}_i(T_i) \;=\;  \frac{1}{\pi(i)}   \;=\;   \frac{2m}{d_i}  
      \hspace{5mm}\hbox{for every $i\in V(G)$.}
\end{equation}

\section{Nordhaus-Gaddum when maximum degree is $n-O(1)$}\label{sec:n-O1}

Consider a random walk on a connected graph $G$ with $n$ vertices and $m$ edges. Let $m_{i,j}$ be the mean first passage time from $i$ to $j$. It is known from \cite{chandra1989electrical} that $m_{i,j}+m_{j,i} = 2mr_{i,j}$. Hence, for any $i \in \{1, 2, \ldots, n\}$,
\begin{align}\label{inequality1}
	\mathcal{K}(G) = \sum_{\substack{j=1\\j\neq i}}^n \bigl(\frac{d_j}{2m}\bigr) m_{i,j}< \sum_{\substack{j=1\\j\neq i}}^n \bigl(\frac{d_j}{2m}\bigr) (m_{i,j}+m_{j,i}) =  \sum_{\substack{j=1\\j\neq i}}^n d_j r_{i,j}
\end{align}
where $d_j$ is the degree of vertex $j$. It is found in the proof of \cite[Proposition 3]{palacios2010kirchhoff} that $r_{i,j}\leq \diam(G)$. We immediately obtain the following upper bound on Kemeny's constant:
\begin{align}\label{inequality2}
	\mathcal{K}(G) < 2m\diam(G).
\end{align}

\begin{remark}\label{remark: n^3}
For a graph $G$ of order $n$, the maximum diameter is $n-1$, and the maximum number of edges is $\frac{n(n-1)}{2}$, though these are not attained by the same graph. Thus this inequality \eqref{inequality2} gives an alternative proof that $\mathcal{K}(G)=O(n^3)$. Families of graphs with $\mathcal{K}(G)=\Theta(n^3)$ are given in \cite{breen2019computing}. 
\end{remark}

This inequality also gives insight into the influence of diameter on the value of Kemeny's constant. In particular, we have the following result for graphs of small diameter.
\begin{proposition}\label{prop:fixed diameter}
    Let $G$ be a graph on $n$ vertices with $\diam(G) = O(1)$. Then, $\mathcal{K}(G)=O(n^2)$.
\end{proposition}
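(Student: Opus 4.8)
The plan is to read this off immediately from the bound $\mathcal{K}(G) < 2m\,\diam(G)$ established in \eqref{inequality2}. The only additional ingredient is the trivial observation that a graph on $n$ vertices has at most $\binom{n}{2} < \tfrac{n^2}{2}$ edges. Writing the hypothesis $\diam(G) = O(1)$ explicitly as the existence of a constant $c$, independent of $n$, with $\diam(G) \le c$ for every graph in the family under consideration, we obtain
\[
\mathcal{K}(G) \;<\; 2m\,\diam(G) \;\le\; 2 \cdot \frac{n^2}{2} \cdot c \;=\; c\,n^2,
\]
which is precisely the assertion $\mathcal{K}(G) = O(n^2)$.

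So there is essentially no obstacle to overcome here: all of the substance is already contained in inequality \eqref{inequality2}, whose proof combines $m_{i,j} < m_{i,j} + m_{j,i} = 2m r_{i,j}$ with the estimate $r_{i,j} \le \diam(G)$ taken from \cite{palacios2010kirchhoff}. One could instead attempt to route through the eigenvalue formula \eqref{eq:eig}, using the standard principle that small diameter forces a lower bound on the spectral gap $1 - \lambda_2$ of the transition matrix; but the crude estimate $\mathcal{K}(G) \le (n-1)/(1-\lambda_2)$ obtained that way would only yield $O(n^3)$ and is therefore insufficient, so the resistance/diameter argument is the one to use.

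The single point worth a sentence of care is the interpretation of $\diam(G) = O(1)$: consistent with the asymptotic conventions set up in Section \ref{sec:notation}, it should be read as a uniform bound over the family of graphs in question, and the constant hidden in the conclusion $O(n^2)$ is then simply twice this diameter bound (after absorbing the factor $\tfrac12$ from the edge count).
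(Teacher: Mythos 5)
Your proof is correct and is exactly the argument the paper intends: Proposition \ref{prop:fixed diameter} is stated as an immediate consequence of the bound $\mathcal{K}(G) < 2m\,\diam(G)$ from \eqref{inequality2} together with the trivial edge count $m \le \binom{n}{2}$, which is precisely what you wrote. Your remark on interpreting $\diam(G)=O(1)$ as a uniform bound over the family is consistent with the conventions of Section \ref{sec:notation} and needs no further justification.
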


The following example shows that a sequence of graphs of fixed constant diameter can attain $\mathcal{K}(G) = \Theta(n^2)$.
\begin{example}\label{ex:fixed diameter}
	Let $n\geq 4$. Consider a graph $G$ obtained from $K_{a}$ and $K_{b}$ by adding an edge between a vertex in $K_{a}$ and a vertex in $K_{b}$. The order of $G$ is $n=a+b$ and $\diam(G) = 3$. Applying \cite[Proposition 2.2]{breen2022kemeny}, and letting $m$ denote the number of edges in $G$, we have
	\begin{align*}
		\mathcal{K}(G) = & \frac{1}{m}\left(\frac{1}{2}(a-1)^3+\frac{1}{2}(b-1)^3+\frac{(b^2-b+2)(a-1)^2}{a}+\frac{(a^2-a+2)(b-1)^2}{b}\right)\\
		&+\frac{1}{2m}(a^2-a+1)(b^2-b+1).
	\end{align*}	
	Setting $a=\ceil{\frac{n}{2}}$ and $b = \lfloor\frac{n}{2}\rfloor$, we can find that $\mathcal{K}(G)\sim \frac{n^2}{8}$.
\end{example}

In the remainder of this section, we determine an upper bound on each summand $d_jr_{i,j}$ in \eqref{inequality1} in order to find an upper bound on $\mathcal{K}(G)$. We pursue this by considering spanning trees and spanning 2-forests of specific types. First, we introduce some notation. Given a connected graph $G$ and two vertices $i$ and $j$, we let $\tau_{i,j}$ denote the number of spanning trees of $G$ in which $i$ and $j$ are adjacent. We use $\mathcal{F}(x,y;z)$ to denote the set of spanning $2$-forests of $G$ where one contains vertices $x$ and $y$, and the other contains $z$.

\begin{lemma}\label{lem:forest i and j adjacent}
	Let $G$ be a connected graph on $n$ vertices, and suppose that $i$ and $j$ are adjacent vertices in $G$. Then $f_{i,j}=\tau_{i,j}$.
\end{lemma}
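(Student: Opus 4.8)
The plan is to exhibit an explicit bijection between the set of spanning trees of $G$ that contain the edge $\{i,j\}$ and the set $\mathcal{F}(i;j)$ of spanning $2$-forests in which $i$ and $j$ lie in different components. Since $\tau_{i,j}$ and $f_{i,j}$ count these two sets respectively, producing such a bijection proves the identity.

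First I would describe the forward map. Take a spanning tree $T$ of $G$ with $\{i,j\}\in E(T)$ and delete the edge $\{i,j\}$. Deleting any edge from a tree leaves exactly two connected components, and since the edge $\{i,j\}$ was the unique path in $T$ between $i$ and $j$, the vertices $i$ and $j$ end up in distinct components. Thus $T\setminus\{i,j\}$ is a spanning $2$-forest belonging to $\mathcal{F}(i;j)$. Conversely, take $F\in\mathcal{F}(i;j)$; by definition $F$ is a spanning forest with exactly two components, one containing $i$ and the other containing $j$. Because $i$ and $j$ are adjacent in $G$, the edge $\{i,j\}$ is present in $G$, and adjoining it to $F$ joins the two components without creating a cycle, since its endpoints lay in different components of $F$. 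Hence $F\cup\{i,j\}$ is a spanning tree of $G$ in which $i$ and $j$ are adjacent.

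It then remains to check that the two operations are mutually inverse, which is immediate: deleting the edge one has just added returns the original forest, and adjoining the edge whose removal created the two components returns the original tree. Therefore the two maps constitute a bijection, and counting both sides gives $f_{i,j}=\tau_{i,j}$.

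I do not expect a genuine obstacle here; the only points requiring care are the standard structural facts that removing a single edge from a spanning tree produces precisely two components and that adding a single edge across the two components of a spanning $2$-forest produces a spanning tree. These follow directly from the basic theory of trees and forests, so the argument is essentially a bookkeeping verification rather than a substantial proof.
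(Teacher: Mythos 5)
Your proof is correct and uses essentially the same bijection as the paper: adding or deleting the edge $\{i,j\}$ to pass between spanning $2$-forests separating $i$ from $j$ and spanning trees containing that edge. The only difference is the direction in which you present the map, which is immaterial.
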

\begin{proof}
We prove this by giving a bijection between the two sets. Let $f\in \mathcal{F}(i;j)$. Then $i$ and $j$ are in different components of $f$, but they are adjacent in $G$. The addition of the edge $\{i,j\}$ to $f$ produces a spanning tree of $G$ in which $i$ and $j$ are adjacent; it is easily seen that this is a bijective map whose inverse constitutes removing the edge $\{i,j\}$ from a spanning tree of $G$ in which this edge appears, producing a unique forest in $\mathcal{F}(i;j)$.
\end{proof}

\begin{theorem}\label{thm:upper bound d f}
	Let $G$ be a connected graph on $n$ vertices, and $i,j\in V(G)$. Let $Z_j$ be the set of vertices $v$ in $N(j)$ such that $v\neq i$ and $v$ is not adjacent to $i$, and $Z_i$ be the set of vertices $v$ in $N(i)$ such that $v\neq j$ and $v$ is not adjacent to $j$. Then,
	\begin{align*}
		d_jf_{i,j} \leq \begin{cases*}
			2\tau + \sum_{v\in Z_j}\left|\mathcal{F}(j,v;i)\right|, & \text{if $i$ and $j$ are not adjacent,}\\
			2\tau + \sum_{v\in Z_j}\left|\mathcal{F}(j,v;i)\right|-\tau_{i,j}, & \text{if $i$ and $j$ are adjacent,}\\
		\end{cases*}
	\end{align*}
	where the equality holds if and only if for each $z \in Z_i$, any path from $z$ to $j$ in $G$ passes through $i$. Moreover, for any fixed $i$,
    \begin{align}\label{the upper for Kemeny}
        \mathcal{K}(G)< \sum_{j\neq i}d_jr_{i,j} < 2(n-1)+\sum_{j\neq i}|Z_j|r_{i,j}.
    \end{align}
\end{theorem}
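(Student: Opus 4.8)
The plan is to compute $d_jf_{i,j}$ \emph{exactly} as a sum of spanning-$2$-forest counts, decompose each summand according to where an auxiliary neighbour of $j$ lies, and then recognize two of the resulting sums through a bijection between spanning trees and spanning $2$-forests. \textbf{Setup.} Write $d_jf_{i,j}=\sum_{v\in N(j)}|\mathcal F(i;j)|$. For each $v\in N(j)$ with $v\neq i$ (note $v\neq j$ automatically), every $f\in\mathcal F(i;j)$ has $v$ in exactly one of its two components, so $|\mathcal F(i;j)|=|\mathcal F(i,v;j)|+|\mathcal F(j,v;i)|$. Partition $N(j)\setminus\{i\}=Z_j\cup A_j$ with $A_j:=N(i)\cap N(j)$, and isolate the term $v=i$: it occurs precisely when $i$ and $j$ are adjacent, and then it contributes $f_{i,j}=\tau_{i,j}$ by Lemma~\ref{lem:forest i and j adjacent}. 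Thus $d_jf_{i,j}$ equals $f_{i,j}$ (present only if $i\sim j$), plus $\sum_{v\in N(j)\setminus\{i\}}|\mathcal F(i,v;j)|$, plus $\sum_{v\in Z_j}|\mathcal F(j,v;i)|$, plus $\sum_{v\in A_j}|\mathcal F(j,v;i)|$, and it remains to control the second and fourth sums.

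\textbf{The bijection.} I would prove that for distinct $x,y$, $\sum_{v\in N(y)\setminus\{x\}}|\mathcal F(x,v;y)|=\tau-\tau_{x,y}$ (reading $\tau_{x,y}=0$ when $x\not\sim y$). Indeed, for $v\in N(y)$ and $f\in\mathcal F(x,v;y)$ the vertices $v,y$ lie in different components of $f$, so $f+\{v,y\}$ is a spanning tree $T$ with $\{v,y\}\in E(T)$ and with $x$ on the $v$-side of $T-\{v,y\}$; this is a bijection onto all such pairs $(T,v)$. For a fixed spanning tree $T$, the unique neighbour of $y$ in $T$ on the $y$--$x$ path is the \emph{only} $v\in N_T(y)$ with $x$ on its side, so these pairs biject with spanning trees $T$, except that we must discard those $T$ whose distinguished neighbour is $x$ itself, i.e.\ those containing the edge $\{x,y\}$; there are $\tau_{x,y}$ of these. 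Taking $(x,y)=(i,j)$ gives $\sum_{v\in N(j)\setminus\{i\}}|\mathcal F(i,v;j)|=\tau-\tau_{i,j}$, and taking $(x,y)=(j,i)$ gives $\sum_{v\in N(i)\setminus\{j\}}|\mathcal F(j,v;i)|=\tau-\tau_{i,j}$; since $A_j\subseteq N(i)\setminus\{j\}$, the latter yields $\sum_{v\in A_j}|\mathcal F(j,v;i)|\le\tau-\tau_{i,j}$. Substituting into the expression from the Setup and splitting the cases $i\not\sim j$ / $i\sim j$ reproduces exactly the claimed bound on $d_jf_{i,j}$.

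\textbf{Equality and the ``moreover'' bound.} The only inequality used is $\sum_{v\in A_j}|\mathcal F(j,v;i)|\le\tau-\tau_{i,j}$; by the bijection its left side counts spanning trees whose distinguished neighbour of $i$ (towards $j$) lies in $A_j=N(i)\cap N(j)$, while the right side counts those whose distinguished neighbour is any vertex $\neq j$, i.e.\ lies in the disjoint union $A_j\cup Z_i$. Hence equality holds iff no spanning tree realizes some $z\in Z_i$ as that neighbour; and a spanning tree with a prescribed $z\in N(i)$ in that role exists iff there is a $z$--$j$ path avoiding $i$ (build a spanning tree of $G-i$ through such a path and attach $i$ via $\{i,z\}$; conversely one extracts such a path from any tree where $z$ plays that role). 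This is precisely the stated equality criterion. For the last display, $\mathcal F(j,v;i)\subseteq\mathcal F(i;j)$ gives $|\mathcal F(j,v;i)|\le f_{i,j}$, hence $d_jf_{i,j}\le 2\tau+|Z_j|f_{i,j}$, and dividing by $\tau$ (using $r_{i,j}=f_{i,j}/\tau$) gives $d_jr_{i,j}\le 2+|Z_j|r_{i,j}$; since $G$ is connected, $i$ has a neighbour $j_0$, and there the sharper bound $d_{j_0}f_{i,j_0}\le 2\tau-\tau_{i,j_0}+|Z_{j_0}|f_{i,j_0}$ with $\tau_{i,j_0}\ge 1$ makes the inequality strict after summing over $j\neq i$. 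Combining with $\mathcal K(G)<\sum_{j\neq i}d_jr_{i,j}$ from \eqref{inequality1} finishes the ``moreover'' statement.

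The main obstacle is the bookkeeping around the bijection: correctly isolating the $v=i$ term so that the two cases of the formula come out right, identifying precisely which spanning trees are lost when $v$ is restricted from $N(i)\setminus\{j\}$ to $A_j$, and translating ``a spanning tree with distinguished neighbour $z$ exists'' into the clean condition on $z$--$j$ paths avoiding $i$.
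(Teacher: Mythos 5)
Your proposal is correct, and while it rests on the same elementary combinatorial move as the paper (attaching the edge $\{v,j\}$ or $\{v,i\}$ to a spanning $2$-forest to produce a spanning tree), its organization is genuinely different. The paper builds a single injection $\psi$ from the disjoint union of rooted forests $\bigcup_{k}\mathcal{F}_{v_k}(i;j)$ into $\mathcal{T}_i\cup\mathcal{T}_j\cup\bigl(\bigcup_{v\in Z_j}\mathcal{F}_v(j,v;i)\bigr)$ and then characterizes equality by determining when $\psi$ is surjective. You instead split $f_{i,j}=|\mathcal{F}(i,v;j)|+|\mathcal{F}(j,v;i)|$ for each $v\in N(j)\setminus\{i\}$ and prove the exact identity $\sum_{v\in N(y)\setminus\{x\}}|\mathcal{F}(x,v;y)|=\tau-\tau_{x,y}$ (each spanning tree has a unique distinguished neighbour of $y$ toward $x$), so that the whole inequality reduces to the single containment $A_j\subseteq N(i)\setminus\{j\}$ with explicitly identified slack $\sum_{z\in Z_i}|\mathcal{F}(j,z;i)|$. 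This buys a transparent equality case --- equality iff $\mathcal{F}(j,z;i)=\emptyset$ for every $z\in Z_i$, i.e.\ iff every $z$--$j$ path passes through $i$ --- with no surjectivity analysis, and the identity $\sum_{v\in N(y)\setminus\{x\}}|\mathcal{F}(x,v;y)|=\tau-\tau_{x,y}$ is a clean reusable statement in its own right. Your derivation of \eqref{the upper for Kemeny}, including the strictness coming from a neighbour $j_0$ of $i$ (for which $\tau_{i,j_0}\ge 1$), matches the paper. One small repair for the write-up: in arguing that $\mathcal{F}(j,z;i)\neq\emptyset$ when some $z$--$j$ path avoids $i$, note that $G-i$ may be disconnected, so rather than ``a spanning tree of $G-i$'' you should take the component of $G-i$ containing $z$ and $j$ as one tree of the $2$-forest, and span $\{i\}$ together with all remaining components (each of which meets $N(i)$, by connectivity of $G$) as the other.
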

\begin{proof}
	Let $i$ and $j$ be vertices of $G$. Let $N(j)\backslash\{i\}=\{v_1,\dots,v_{s_0}\}$. If $i$ and $j$ are adjacent, then $s_0=d_j-1$; otherwise, $s_0=d_j$. 
	For $k=1,\dots,s_0$, we define $\mathcal{F}_{v_k}(i;j)$ to be the set obtained from $\mathcal{F}(i;j)$ by labelling $v_k$ as a root. Similarly, for each $v\in Z_j$, we define $\mathcal{F}_{v}(j,v;i)$ to be the set obtained from $\mathcal{F}(j,v;i)$ by labelling $v$ as a root. We define $\mathcal{T}_i$ (resp. $\mathcal{T}_j$) to be the set of spanning rooted trees of $G$ with root $i$ (resp. with root $j$). Let $\mathcal{F} = \bigcup_{k=1}^{s_0}\mathcal{F}_{v_k}(i;j)$, and $\mathcal{T} = \mathcal{T}_i\cup\mathcal{T}_j\cup\left(\bigcup_{v\in Z_j}\mathcal{F}_v(j,v;i)\right)$. Note that $|\mathcal{F}| = d_jf_{i,j}$ if $i$ is adjacent to $j$, and $(d_j-1)f_{i,j}$ otherwise. For the proof, we shall define a map between these two combinatorial objects $\mathcal{F}$ and $\mathcal{T}$, show that it is injective, and give the conditions under which it is also surjective; this determines the inequality as stated, and also characterizes the equality case.
	
	Define a map $\psi:\mathcal{F}\rightarrow \mathcal{T}$ as follows: for each $f \in \mathcal{F}$,
	\begin{itemize}
		\item[(i)] If $f\in\mathcal{F}_{v_k}(i;j)$ for some $1\leq k\leq s_0$ so that $v_k$ is connected to $i$ in $f$, then $\psi(f)$ is a spanning rooted tree of $G$ obtained from $f$ by adding an edge $\{v_k,j\}$ and re-labelling $j$ as a root.
		\item[(ii)] If $f\in\mathcal{F}_{v_k}(i;j)$ for some $1\leq k\leq s_0$ so that $v_k$ is connected to $j$ in $f$ and $v_k\notin Z_j$, then $\psi(f)$ is a spanning rooted tree of $G$ obtained from $f$ by adding an edge $\{v_k,i\}$ and re-labelling $i$ as a root.
		\item[(iii)] If $f\in\mathcal{F}_{v_k}(i;j)$ for some $1\leq k\leq s_0$ so that $v_k$ is connected to $j$ in $f$ and $v_k\in Z_j$, then $\psi(f)=f$.
	\end{itemize}
	Since $f$ is a spanning $2$-forest, $v_k$ is connected to either $i$ or $j$. If $v_k\notin Z_j$, then $v_k$ is adjacent to $i$, so the addition of $\{v_k,i\}$ in (ii) is well-defined. Hence, $\psi$ is well-defined.
	
	We claim that $\psi$ is injective. Suppose that $\psi(f) = \psi(g)$ for some $f,g\in\mathcal{F}$. If $\psi(f)$ and $\psi(g)$ are spanning forests, then by (iii), we have $f=g$. Suppose that $\psi(f)$ and $\psi(g)$ are spanning trees. Since $\psi(f)$ and $\psi(g)$ have the same root, the newly inserted edges to obtain $\psi(f)$ and $\psi(g)$ are both incident to either $i$ or $j$. Furthermore, they must be on the unique path from $i$ to $j$ in $\psi(f)$ and $\psi(g)$. Hence, the newly inserted edges are the same. Therefore, $f$ and $g$ must have the same root and so $f = g$.
	
	Now we consider under what circumstance $\psi$ is surjective. Let $t\in \mathcal{T}$. Note $\mathcal{F}_v(j,v;i)\subseteq \mathcal{F}_v(j;i)$. If $t\in \bigcup_{v\in Z_j}\mathcal{F}_v(j,v;i)$, then $\psi(t) = t$. 
	
	Suppose that $i$ and $j$ are not adjacent. Let $t\in\mathcal{T}_i\cup\mathcal{T}_j$. There exists a vertex $w$ such that $w\neq i$, $w\neq j$, and it is adjacent to the root $r_0$ of $t$ and on the path from $i$ to $j$ (here either $r_0=i$ or $r_0=j$).  Deleting the edge $\{w,r_0\}$ from $t$, assigning $w$ a root, we obtain a spanning $2$-forest $f$ with root $w$. If $t\in \mathcal{T}_j$, then $w=v_k$ for some $1\leq k\leq s_0$ so that $f\in\mathcal{F}_{v_k}(i;j)$ and $\psi(f)=t$. Suppose $t\in\mathcal{T}_i$. Then, either $w\in N(j)$ or $w\in Z_i$. Obviously, if $w\in N(j)$ then $\psi(f)=t$. Note that if $w\in Z_i$ then $i$ is not on the path from $w$ to $j$ in $t$. Therefore, $\psi$ is surjective if and only if for each $z \in Z_i$, any path from $z$ to $j$ in $G$ passes through $i$.
	
	For the remaining case, we assume that $i$ and $j$ are adjacent. Note that $s_0=d_j-1$. Let $t\in\mathcal{T}_i\cup\mathcal{T}_j$. If $t$ does not have an edge $\{i,j\}$, then the same argument above can be applied. Moreover, from Lemma \ref{lem:forest i and j adjacent}, the number of spanning trees containing $\{i,j\}$ in each of $\mathcal{T}_i$ and $\mathcal{T}_j$ is $f_{i,j}=\tau_{i,j}$. Hence, we have $(s_0-1)f_{i,j}\leq 2\tau + \sum_{v\in Z_j}\left|\mathcal{F}(j,v;i)\right|-2\tau_{i,j}$. Therefore, we obtain the upper bounds for $d_jf_{i,j}$ with the the same condition for the equality.

    Finally, we can see $|\mathcal{F}(j,v;i)|\leq f_{i,j}$ and hence    
    $d_jf_{i,j}\leq 2\tau+|Z_j|f_{i,j}$ for any $j\in V(G)\backslash\{i\}$. Note that if $i$ and $j$ are adjacent, then $d_jf_{i,j}< 2\tau+|Z_j|f_{i,j}$. Equation \eqref{the upper for Kemeny} follows from \eqref{inequality1}.
\end{proof}


\begin{corollary}
	Let $G$ be a connected graph on $n$ vertices with maximum degree $n-1$. Then, $\mathcal{K}(G)<2(n-1)$ and hence $\mathcal{K}(G)=\Theta(n)$.
\end{corollary}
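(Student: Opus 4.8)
The plan is to apply Theorem~\ref{thm:upper bound d f} with the reference vertex $i$ taken to be a vertex of maximum degree, so that $d_i = n-1$. Such an $i$ is adjacent to \emph{every} other vertex of $G$. The key observation is then that this forces all the "error" sets $Z_j$ to be empty: for any $j \neq i$, every neighbour $v$ of $j$ is adjacent to $i$ (since $i$ is adjacent to all of $V(G)$), so there are no vertices $v \in N(j)$ with $v \neq i$ and $v \not\sim i$. Hence $Z_j = \emptyset$ for every $j \neq i$.

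With $Z_j = \emptyset$ and with $i,j$ adjacent for every $j \neq i$, the last paragraph of the proof of Theorem~\ref{thm:upper bound d f} gives the strict bound $d_j f_{i,j} < 2\tau + |Z_j| f_{i,j} = 2\tau$, i.e.\ $d_j r_{i,j} = d_j f_{i,j}/\tau < 2$ for each $j \neq i$. Summing over the $n-1$ vertices $j \neq i$ and invoking \eqref{inequality1} (or, equivalently, reading this directly off \eqref{the upper for Kemeny} with all $|Z_j| = 0$) yields
\[
\mathcal{K}(G) \;<\; \sum_{j \neq i} d_j r_{i,j} \;<\; 2(n-1).
\]

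For the matching lower order of magnitude, note that any connected graph on $n$ vertices has $\mathcal{K}(G) = \Omega(n)$: from \eqref{eq:eig}, each eigenvalue $\lambda_i$ of the transition matrix satisfies $-1 \le \lambda_i < 1$, so $\tfrac{1}{1-\lambda_i} \ge \tfrac12$ and therefore $\mathcal{K}(G) \ge \tfrac{n-1}{2}$ (alternatively, one may cite the known bound $\mathcal{K}(G) \ge (n-1)^2/n$). Combining this with $\mathcal{K}(G) < 2(n-1)$ gives $\mathcal{K}(G) = \Theta(n)$.

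There is no genuine obstacle here; the corollary is essentially immediate once Theorem~\ref{thm:upper bound d f} is in hand. The only point requiring a moment's care is to check that it is the "$i$ and $j$ adjacent" branch of the theorem that applies for \emph{every} $j$ — which is automatic because the chosen $i$ dominates $G$ — so that the inequality is strict and the term $\sum_{j\neq i} |Z_j| r_{i,j}$ vanishes identically.
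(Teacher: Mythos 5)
Your proof is correct and follows exactly the route the paper intends: choosing $i$ to be the dominating vertex forces every $Z_j$ to be empty, so \eqref{the upper for Kemeny} immediately yields $\mathcal{K}(G)<2(n-1)$, and the lower bound $\mathcal{K}(G)\geq (n-1)/2$ from \eqref{eq:eig} gives $\Theta(n)$. No gaps.
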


\begin{corollary}
	Let $G$ be a connected graph on $n$ vertices with maximum degree $n-O(1)$. Then, $\mathcal{K}(G)=O(n)$. This implies that $\mathcal{K}(G)\mathcal{K}(\overline{G})=O(n^4)$ if $\overline{G}$ is connected.
\end{corollary}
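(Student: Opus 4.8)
The plan is to apply Theorem~\ref{thm:upper bound d f} with $i$ chosen to be a vertex of maximum degree. Write $\Delta(G)=n-c$, where the hypothesis ``$\Delta(G)=n-O(1)$'' means precisely that $c=n-\Delta(G)$ is bounded by a constant independent of $n$, and fix $i$ with $d_i=\Delta(G)$. Then \eqref{the upper for Kemeny} gives
\[
\mathcal{K}(G)<2(n-1)+\sum_{j\neq i}|Z_j|\,r_{i,j},
\]
so the task reduces to showing that each summand $|Z_j|\,r_{i,j}$ is $O(1)$ uniformly in $j$; summing over the $n-1$ choices of $j$ then yields $\mathcal{K}(G)<2(n-1)+O(n)=O(n)$, and the trivial lower bound $\mathcal{K}(G)\ge (n-1)^2/n$ upgrades this to $\Theta(n)$ if desired (only $O(n)$ is needed for the product statement).

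For the first factor, $Z_j$ is by definition contained in the set of vertices that are neither equal to $i$ nor adjacent to $i$, and there are exactly $n-1-d_i=c-1$ such vertices; hence $|Z_j|\le c-1$ for every $j\neq i$. For the second factor I would bound the eccentricity of $i$: in a shortest path from $i$ to a vertex $w$, every vertex after the first step (the endpoint $w$ included) must be a non-neighbour of $i$, for otherwise the path could be shortened by routing through $i$; since $i$ has only $c-1$ non-neighbours, such a path has length at most $c$, so $\dist(i,j)\le c$ for all $j$, and in particular $\diam(G)\le 2c$. Combining this with the bound $r_{i,j}\le\diam(G)$ from \cite{palacios2010kirchhoff} used in \eqref{inequality2} (or directly with $r_{i,j}\le\dist(i,j)$) gives $r_{i,j}\le c$. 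Therefore $\sum_{j\neq i}|Z_j|\,r_{i,j}\le (n-1)(c-1)c=O(n)$, whence $\mathcal{K}(G)=O(n)$.

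For the product statement, once $\mathcal{K}(G)=O(n)$ is in hand it only remains to recall that $\mathcal{K}(\overline{G})=O(n^3)$ whenever $\overline{G}$ is connected --- the result of \cite{breen2019computing}, reproved in Remark~\ref{remark: n^3} --- so that $\mathcal{K}(G)\mathcal{K}(\overline{G})=O(n)\cdot O(n^3)=O(n^4)$. I do not expect a genuine obstacle here, since all the combinatorial work is already packaged in Theorem~\ref{thm:upper bound d f}; the only point requiring care is the bookkeeping observation that choosing $i$ of maximum degree makes both $|Z_j|\le n-1-\Delta(G)$ and the eccentricity of $i$ simultaneously $O(1)$. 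In other words, a near-universal vertex automatically forces both $O(1)$ diameter \emph{and} few non-neighbours, which is exactly what cuts the residual sum in \eqref{the upper for Kemeny} down to $O(n)$ rather than the $O(n^2)$ one would get from Proposition~\ref{prop:fixed diameter} alone.
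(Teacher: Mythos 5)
Your proposal is correct and follows essentially the same route as the paper: choose $i$ of maximum degree, note $|Z_j|\le c-1$ since $Z_j$ lies in the set of non-neighbours of $i$, bound $r_{i,j}$ by a constant via the distance/diameter (the paper uses $\diam(G)\le C+1$; your $r_{i,j}\le\dist(i,j)\le c$ or $\diam(G)\le 2c$ is equally good), and apply \eqref{the upper for Kemeny} together with $\mathcal{K}(\overline{G})=O(n^3)$ for the product. The only difference is that you spell out the eccentricity and counting arguments that the paper leaves implicit.
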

\begin{proof}
    Assume that there is a constant $C$ such that $\Delta(G)\geq n-C$.  Let $i$ be a vertex with the maximum degree. It follows that $|Z_j|\leq C-1$ for every $j\in V(G)\setminus\{i\}$ and $\diam(G)\leq C+1$. Since $r_{ij}\leq \diam(G)$, we have $\sum_{j\neq i}|Z_j|r_{ij}  \leq (n-1)(C-1)(C+1)$.  The conclusion follows.
\end{proof}

\begin{example}
    The complements of the barbell-type graphs in \cite{breen2019computing} have maximum degree $n-3$, and the complement of a tree has maximum degree $n-2$. Hence, their Kemeny's constants are $O(n)$ when they are connected. In Section~\ref{sec:app}, $\mathcal{K}(G)\mathcal{K}(\overline{G})$ will be explicitly considered.
\end{example}

The following corollary will be used in the next section.

\begin{corollary}
    \label{cor-deltaplusbound}
    Let $G$ be a connected graph on $n$ vertices such that 
    $\Delta(G)+\delta(G)\geq n$.  Then
    \[     \mathcal{K}(G) \;< \; \left(  \frac{2\delta(G)}{\delta(G)+\Delta(G)-n+1}\right) \,n \,.
    \]   
    In particular, $\mathcal{K}(G)$ must be less than $2n^2$.
\end{corollary}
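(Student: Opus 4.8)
The plan is to apply Theorem~\ref{thm:upper bound d f} with the base vertex $i$ chosen to have maximum degree. Write $\Delta=\Delta(G)$ and $\delta=\delta(G)$, and set $k:=\delta+\Delta-n+1$; the hypothesis $\Delta+\delta\geq n$ is exactly the assertion $k\geq 1$. Fix a vertex $i$ with $d_i=\Delta$, so that $|\{i\}\cup N(i)|=\Delta+1$. For every $j\in V(G)\setminus\{i\}$, note that $Z_j\subseteq V(G)\setminus(\{i\}\cup N(i))$, a set of cardinality $n-1-\Delta$, so $|Z_j|\leq n-1-\Delta$. On the other hand, since $N(j)$ is the disjoint union of $Z_j$ and $N(j)\cap(\{i\}\cup N(i))$, inclusion--exclusion inside the $n$-element vertex set gives
\[
d_j-|Z_j|\;=\;\bigl|N(j)\cap(\{i\}\cup N(i))\bigr|\;\geq\; d_j+(\Delta+1)-n\;\geq\;\delta+\Delta+1-n\;=\;k\;\geq\;1 .
\]

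Next I would convert this into a uniform bound on the effective resistances $r_{i,j}$. The final line of the proof of Theorem~\ref{thm:upper bound d f} gives $d_jf_{i,j}\leq 2\tau+|Z_j|f_{i,j}$ for every $j\neq i$; dividing by $\tau$ and using $r_{i,j}=f_{i,j}/\tau$ turns this into $(d_j-|Z_j|)\,r_{i,j}\leq 2$. Combined with $d_j-|Z_j|\geq k$ from the previous paragraph, this yields $r_{i,j}\leq 2/k$ for every $j\in V(G)\setminus\{i\}$.

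It then remains to feed these two estimates into inequality~\eqref{the upper for Kemeny} of Theorem~\ref{thm:upper bound d f}. There are $n-1$ vertices $j\neq i$, each contributing $|Z_j|\,r_{i,j}\leq(n-1-\Delta)\cdot\tfrac{2}{k}$, so
\[
\mathcal{K}(G)\;<\;2(n-1)+\sum_{j\neq i}|Z_j|\,r_{i,j}\;\leq\;2(n-1)+\frac{2(n-1)(n-1-\Delta)}{k}.
\]
Since $n-1-\Delta=\delta-k$, the right-hand side equals $2(n-1)\bigl(1+\tfrac{\delta-k}{k}\bigr)=\tfrac{2(n-1)\delta}{k}<\tfrac{2\delta n}{k}$, which is the claimed inequality. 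The ``in particular'' statement is then immediate, since $k\geq1$ and $\delta\leq n-1$ give $\tfrac{2\delta n}{k}\leq 2\delta n\leq 2(n-1)n<2n^2$.

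I expect the only genuinely non-routine point to be the choice of a maximum-degree base vertex together with the inclusion--exclusion step: it is this that forces $d_j-|Z_j|\geq k$ and hence the uniform bound $r_{i,j}\leq 2/k$, after which everything reduces to bookkeeping. (In place of the crude bound $|Z_j|\leq n-1-\Delta$ one could use the double-counting identity $\sum_{j\neq i}|Z_j|=\sum_{v\in V(G)\setminus(\{i\}\cup N(i))}d_v$, but the crude bound already suffices and keeps the algebra short.)
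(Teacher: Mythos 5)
Your proof is correct, and it rests on the same skeleton as the paper's: apply Theorem~\ref{thm:upper bound d f} at a maximum-degree vertex $i$, use $|Z_j|\leq n-1-\Delta(G)$, and bring in the minimum degree. The one genuine difference is how you close the argument. The paper bounds $\sum_{j\neq i}|Z_j|r_{i,j}\leq\frac{n-1-\Delta(G)}{\delta(G)}\sum_{j\neq i}d_jr_{i,j}$ and then absorbs this back into the left-hand side of \eqref{the upper for Kemeny}, solving a self-referential inequality for the sum $\sum_{j\neq i}d_jr_{i,j}$. You instead extract from the term-by-term inequality $d_jf_{i,j}\leq 2\tau+|Z_j|f_{i,j}$ the pointwise bound $(d_j-|Z_j|)r_{i,j}\leq 2$, and your inclusion--exclusion observation $d_j-|Z_j|\geq\delta(G)+\Delta(G)+1-n$ then gives the uniform estimate $r_{i,j}\leq 2/(\delta(G)+\Delta(G)-n+1)$, after which the sum is bounded directly. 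Both routes land on the same quantity $2(n-1)\delta(G)/(\delta(G)+\Delta(G)-n+1)$, but yours produces as a byproduct a uniform bound on the effective resistances from a maximum-degree vertex, which is slightly more information than the paper records; the paper's absorption trick avoids needing the pointwise statement at all. Your algebraic bookkeeping ($n-1-\Delta=\delta-k$, etc.) and the deduction of the $2n^2$ bound are all correct.
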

\begin{proof}
    Assume $\Delta(G)+\delta(G)\geq n$. 
    Since $|Z_j|\leq n-1-\Delta(G)$ for all $j\in V(G)\setminus\{i\}$, we find that
    \begin{align*}
       \sum_{j\neq i}|Z_j|r_{i,j}  \; \leq \;  (n-1-\Delta(G))\sum_{j\neq i}r_{i,j} \;\leq \;  (n-1-\Delta(G))\sum_{j\neq i}\frac{d_j \,r_{i,j}}{\delta(G)} .
    \end{align*}
From \eqref{the upper for Kemeny}, we have $\sum_{j\neq i}d_jr_{i,j} < 2(n-1)+\sum_{j\neq i}|Z_j|r_{i,j}$. Hence,
\[
   \left(\frac{\delta(G)+\Delta(G)-n+1}{\delta(G)}\right)  \sum_{j\neq i}d_jr_{i,j}  < 2n-2.
\]
Note that $\delta(G)+\Delta(G)-n+1>0$ by our assumption.  Using \eqref{the upper for Kemeny}, we obtain
\[   \mathcal{K}(G)  \;<\;  \frac{2\delta(G) \,n}{\delta(G)+\Delta(G)-n+1}.\qedhere
\]
\end{proof}

\section{Nordhaus-Gaddum bounds when maximum degree is $n-\Omega(n)$}
\label{sec:n-on}

In this section, we examine the relationship between some properties
of a graph $G$ and its complement   $\overline{G}$.
We begin by stating our results and presenting some short proofs.
Then we shall introduce some notation and background that we 
shall need for the longer proof of Proposition \ref{prop.complementLU}.

For $v\in V(G)$, in addition to our usual notation of $d_v$ for the degree of $v$ in $G$, we shall also write $\overline{d}_v$ for the degree of
$v$ in $\overline{G}$.  
Then $d_v+{\overline{d}}_v=n-1$ for every $v$.
Recall that $\Delta(G)$ is the maximum degree in $G$ and $\delta(G)$ is the minimum.

\begin{theorem} 
   \label{thm.complementU}
Let $U$ be a real constant such that $0<U<1$.  Then there is a constant    
$\Psi_U$ such that for every $n\in\mathbb{N}$ and every
graph $G$ on $n$ vertices such that $\Delta(G)\leq Un$, 
\[    \min\left\{  \mathcal{K}(G), \, \mathcal{K}(\overline{G})\right\}   \;\leq \; n\Psi_U   \,.
\]
\end{theorem}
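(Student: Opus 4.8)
The plan is to prove the theorem by a case analysis on the maximum degree $\Delta(G)$, using the already-established bounds from Section \ref{sec:n-O1} together with a parallel argument applied to $\overline{G}$. The key observation is that Corollary \ref{cor-deltaplusbound} gives a good bound on $\mathcal{K}(G)$ whenever $\Delta(G)+\delta(G)$ is safely above $n$, and the same corollary applied to $\overline{G}$ gives a bound on $\mathcal{K}(\overline{G})$ whenever $\Delta(\overline{G})+\delta(\overline{G}) = (n-1-\delta(G))+(n-1-\Delta(G)) = 2n-2-\delta(G)-\Delta(G)$ is safely above $n$, i.e.\ whenever $\Delta(G)+\delta(G)$ is safely below $n-2$. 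So between the two graphs we have control unless $\Delta(G)+\delta(G)$ hovers near $n$, and we must handle that regime separately.

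First I would fix a threshold, say $\epsilon = (1-U)/2$ or similar, and split into three cases according to the value of $\delta(G)$ (equivalently $\delta := \delta(G)$, with $\Delta := \Delta(G) \le Un$ given). Case 1: $\delta + \Delta \ge n + \epsilon n$. Then $\delta + \Delta - n + 1 \ge \epsilon n$, and Corollary \ref{cor-deltaplusbound} gives $\mathcal{K}(G) < \frac{2\delta n}{\delta+\Delta-n+1} \le \frac{2\delta n}{\epsilon n} = \frac{2\delta}{\epsilon} \le \frac{2Un}{\epsilon}$, which is $n\Psi_U$ for an appropriate constant. Case 2: $\delta + \Delta \le n - 2 - \epsilon n$. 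Then for $\overline{G}$ we have $\Delta(\overline{G}) + \delta(\overline{G}) = 2n-2-\delta-\Delta \ge n + \epsilon n$, so applying Corollary \ref{cor-deltaplusbound} to $\overline{G}$ (note $\overline{G}$ is connected here, since $\delta(\overline{G}) = n-1-\Delta \ge n-1-Un \ge 1$ for $n$ large, and small $n$ can be absorbed into the constant, or argued directly) yields $\mathcal{K}(\overline{G}) < \frac{2\delta(\overline{G})\, n}{\delta(\overline{G})+\Delta(\overline{G})-n+1} \le \frac{2(n-1)n}{\epsilon n} \le \frac{2n}{\epsilon}$. Again this is $n\Psi_U$. (One must also check $\overline G$ is connected in Case 1 or handle $\mathcal{K}(\overline G)=\infty$, but there the bound is on $\mathcal{K}(G)$ so it is irrelevant; symmetrically in Case 2 we only need $\overline G$ connected, which holds.)

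The main obstacle is the intermediate Case 3, where $n - 2 - \epsilon n < \delta + \Delta < n + \epsilon n$, i.e.\ $\delta + \Delta = n + O(\epsilon n)$. Here neither corollary is strong enough on its own. My plan for this case is to use the deferred Proposition \ref{prop.complementLU} (whose statement and proof come later in the section and which I am permitted to assume), which presumably provides a direct relationship --- an upper bound on $\min\{\mathcal{K}(G),\mathcal{K}(\overline{G})\}$, or a lower bound on a suitable product/spectral quantity --- precisely for graphs with bounded maximum degree. Concretely, since $\Delta(G) \le Un$, we have $\delta(\overline{G}) = n-1-\Delta(G) \ge (1-U)n - 1 = \Omega(n)$, so $\overline{G}$ has minimum degree linear in $n$; a random walk on a graph of linear minimum degree mixes in $O(1)$ steps-worth of structure, which is exactly the kind of hypothesis under which $\mathcal{K}$ is $O(n)$. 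I would invoke that proposition to conclude $\mathcal{K}(\overline{G}) = O(n)$ in this regime (with the implied constant depending only on $U$), completing the case and hence the theorem.

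Putting the three cases together, in every case one of $\mathcal{K}(G)$, $\mathcal{K}(\overline{G})$ is at most a constant (depending only on $U$) times $n$, so setting $\Psi_U$ to be the maximum of the constants arising in the three cases gives the claim. The only delicate bookkeeping is (a) choosing $\epsilon$ small enough relative to $1-U$ so that Case 3 genuinely has $\delta(\overline G) = \Omega(n)$ with a constant uniform in the case, and (b) absorbing finitely many small values of $n$ into $\Psi_U$ (for small $n$, $\mathcal{K}(G) = O(n^3) = O(n)$ trivially with an $n$-dependent-but-bounded constant, or one simply notes $\mathcal{K}$ of any connected graph on $n$ vertices is finite). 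Neither of these is a serious difficulty; the real content is Proposition \ref{prop.complementLU} for Case 3.
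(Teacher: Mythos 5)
Your proposal is correct and follows essentially the same route as the paper: Corollary \ref{cor-deltaplusbound} handles the regime where the degrees are unbalanced relative to $n$, and Proposition \ref{prop.complementLU} (whose actual statement bounds $\min\{\mathcal{K}(G),\mathcal{K}(\overline{G})\}$ under $Ln\leq\delta(G)\leq\Delta(G)\leq Un$, exactly matching your needs in the middle regime) carries the real weight. The only cosmetic difference is that the paper uses a two-way split on $\delta(G)$ versus $Ln$, so your Case 1 is subsumed into the proposition's case rather than handled separately by the corollary.
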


\begin{remark}
\label{rem.Gconn}
  We do not assume that the graphs are connected.  Note that $G$ and $\overline{G}$
cannot both be disconnected.  In particular, for every graph $G$, at least one of $\mathcal{K}(G)$ or
$\mathcal{K}(\overline{G})$ is finite.
Consequently, it suffices to prove the theorem for all $n\geq n_0$, where $n_0$ is a $U$-dependent 
constant.
\end{remark}

Since $\Delta(\overline{G}) = n-1-\delta(G)$, the following corollary is immediate.

\begin{corollary}
   \label{cor.complementL}
Let $L$ be a real constant such that $0<L<1$.  Then    
for every $n\in\mathbb{N}$ and every
graph $G$ on $n$ vertices such that $\delta(G)\geq Ln$, 
\[    \min\left\{  \mathcal{K}(G), \, \mathcal{K}(\overline{G})\right\}   \;\leq \; n\Psi_{1-L}   \,,
\]
where $\Psi_{1-L}$ is the constant from Theorem \ref{thm.complementU} with $U=1-L$.
\end{corollary}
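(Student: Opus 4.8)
The plan is to deduce this corollary directly from Theorem \ref{thm.complementU} by applying that result to the complement graph. First I would set $H=\overline{G}$ and invoke the elementary identity $d_v+\overline{d}_v=n-1$ for every vertex $v$, which gives $\Delta(H)=\Delta(\overline{G})=n-1-\delta(G)$. From the hypothesis $\delta(G)\geq Ln$ it then follows that $\Delta(H)\leq n-1-Ln<(1-L)n$, so $H$ satisfies the maximum-degree hypothesis of Theorem \ref{thm.complementU} with the constant $U=1-L$; note that $0<1-L<1$ precisely because $0<L<1$, so $\Psi_{1-L}$ is a legitimate instance of the constant furnished by that theorem.

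Next I would apply Theorem \ref{thm.complementU} to $H$ with $U=1-L$, obtaining $\min\{\mathcal{K}(H),\,\mathcal{K}(\overline{H})\}\leq n\Psi_{1-L}$. Since $\overline{H}=\overline{\overline{G}}=G$ and $\mathcal{K}(H)=\mathcal{K}(\overline{G})$, the left-hand side is exactly $\min\{\mathcal{K}(G),\,\mathcal{K}(\overline{G})\}$, which yields the claimed bound. As noted in Remark \ref{rem.Gconn}, no connectedness assumption on $G$ or $\overline{G}$ is needed, since Theorem \ref{thm.complementU} itself does not require it and at least one of the two Kemeny constants is finite.

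There is essentially no obstacle in this argument: the only thing to be checked is that the range restriction on $U$ in Theorem \ref{thm.complementU} is respected under the substitution $U=1-L$, which is immediate. All of the real work is contained in Theorem \ref{thm.complementU} (and ultimately in Proposition \ref{prop.complementLU}), so this corollary is a one-line consequence once that theorem is in hand; I would present it as such rather than reproving anything.
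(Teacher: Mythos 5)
Your proof is correct and matches the paper's intended argument exactly: the paper derives this corollary immediately from the identity $\Delta(\overline{G})=n-1-\delta(G)$ and an application of Theorem \ref{thm.complementU} to $\overline{G}$ with $U=1-L$, just as you do. The verification that $\Delta(\overline{G})\leq n-1-Ln<(1-L)n$ and that $\min\{\mathcal{K}(\overline{G}),\mathcal{K}(G)\}$ is symmetric in $G$ and $\overline{G}$ is all that is needed.
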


We also obtain a linear bound that holds for all regular graphs uniformly in degree.

\begin{corollary}
    \label{cor.regular}
There exists a constant $\Psi_{reg}$ such that 
\[    \min\left\{  \mathcal{K}(G), \, \mathcal{K}(\overline{G})\right\}   \;\leq \; n\Psi_{reg} 
\]
for every regular graph $G$ with $n$ vertices.
\end{corollary}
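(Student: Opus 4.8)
The plan is to deduce Corollary \ref{cor.regular} from Theorem \ref{thm.complementU} and Corollary \ref{cor.complementL} by splitting the class of regular graphs according to degree. Let $G$ be a $k$-regular graph on $n$ vertices, so that $\Delta(G)=\delta(G)=k$. Fix once and for all a threshold, say $U_0=3/4$ (any fixed value strictly between $1/2$ and $1$ works). There are two regimes.

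First I would handle the low-degree case: if $k\leq U_0 n$, then $\Delta(G)=k\leq U_0 n$, so Theorem \ref{thm.complementU} with $U=U_0$ gives $\min\{\mathcal{K}(G),\mathcal{K}(\overline{G})\}\leq n\Psi_{U_0}$. Next, the high-degree case: if $k> U_0 n$, then $\delta(G)=k> U_0 n$, so we may apply Corollary \ref{cor.complementL} with $L=U_0$ to get $\min\{\mathcal{K}(G),\mathcal{K}(\overline{G})\}\leq n\Psi_{1-U_0}$ (note $1-U_0=1/4\in(0,1)$, so this is legitimate). Since the two regimes together cover all possible values of $k$, in every case we obtain the bound with the single constant
\[
  \Psi_{reg} \;=\; \max\{\Psi_{U_0},\,\Psi_{1-U_0}\},
\]
which is independent of $n$ and of $k$. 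This $\Psi_{reg}$ is exactly the constant claimed in the statement.

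The point worth emphasizing — and really the only substantive step — is that the two constants being combined, $\Psi_{U_0}$ and $\Psi_{1-U_0}$, come from \emph{fixed} choices of $U$ (namely $3/4$ and $1/4$), so taking their maximum yields something that does not grow with $n$ or with the degree $k$; this is precisely why the bound is uniform over all regular graphs. One should also note, per Remark \ref{rem.Gconn}, that we do not need $G$ or $\overline{G}$ to be connected: at least one of them is, so $\min\{\mathcal{K}(G),\mathcal{K}(\overline{G})\}$ is finite and the inequality is meaningful. I do not anticipate a real obstacle here; the work is entirely in Theorem \ref{thm.complementU}, and this corollary is just a two-case bookkeeping argument. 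The only thing to be careful about is choosing the threshold strictly between $0$ and $1$ at both ends (so both applications are valid) — picking $U_0=3/4$, or symmetrically $U_0=1/2$, does this cleanly.
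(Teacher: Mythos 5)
Your proposal is correct and is essentially the paper's argument: the paper simply notes that for a regular graph either $G$ or $\overline{G}$ has degree at most $n/2$ and applies Theorem \ref{thm.complementU} with $U=1/2$ to that one, taking $\Psi_{reg}=\Psi_{1/2}$. Your two-case split via Theorem \ref{thm.complementU} and Corollary \ref{cor.complementL} is the same idea (Corollary \ref{cor.complementL} is just the theorem applied to the complement), with the cosmetic difference that choosing the threshold $U_0=1/2$ instead of $3/4$ collapses your two constants into one.
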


\smallskip
\noindent
Indeed, we can take $\Psi_{reg}$ to be $\Psi_{1/2}$ from Theorem \ref{thm.complementU}.
This is because for any regular $n$-vertex graph, either the graph or its complement must have its
degree less than or equal to $(1/2)n$.

\bigskip

As we shall show, Theorem \ref{thm.complementU} follows easily from 
the following proposition and Corollary \ref{cor-deltaplusbound}.

\begin{proposition} 
   \label{prop.complementLU}
Let $L$ and $U$ be real constants such that $0<L< U<1$.  Then there is a constant    
$\Psi_{L,U}$ such that for every $n\in\mathbb{N}$ and every
graph $G$ on $n$ vertices such that $Ln\leq \delta(G)\leq \Delta(G)\leq Un$, 
\[    \min\left\{  \mathcal{K}(G), \, \mathcal{K}(\overline{G})\right\}   \;\leq \; n\,\Psi_{L,U} \,.
\]
\end{proposition}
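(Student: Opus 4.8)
The plan is to bound $\mathcal{K}(G)$ directly (up to a constant factor) using the resistance formula \eqref{eq:res}, $\mathcal{K}(G)=\frac{1}{4m}\sum_i\sum_j d_i r_{i,j} d_j$, together with the degree bounds $Ln\le d_v\le Un$. Since every degree is $\Theta(n)$, we have $m=\Theta(n^2)$, so $\frac{1}{4m}\le \frac{1}{2Ln(n-1)}$, and $d_i d_j\le U^2n^2$; hence $\mathcal{K}(G)\le \frac{U^2 n}{2L}\cdot\frac{1}{n-1}\sum_i\sum_j r_{i,j}=\frac{U^2 n}{2L(n-1)}\cdot 2\,\Kf(G)$. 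So it suffices to show $\Kf(G)=O(n^2)$ for graphs with $\delta(G)\ge Ln$, i.e. that such graphs have all pairwise effective resistances $O(1/n)$. Equivalently, by Proposition \ref{prop:fixed diameter}-type reasoning, one wants to control $r_{i,j}$ uniformly. The cleanest route: show that under $\delta(G)\ge Ln$ the graph is ``dense enough'' that $r_{i,j}\le c/n$ for all $i,j$ and some $c=c(L)$, which immediately gives $\Kf(G)\le \frac{n^2 c}{2n}=O(n^2)$ and therefore $\mathcal{K}(G)=O(n)$ outright — with no need to invoke the complement at all in this regime.

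To get the resistance bound I would argue as follows. Fix $i$ and $j$. By the minimum-degree hypothesis, $|N(i)|,|N(j)|\ge Ln$, and for $n$ large these neighbourhoods either intersect or there are many edges between them: either $N(i)\cap N(j)\ne\emptyset$, giving a path of length $2$, or $|N(i)|+|N(j)|>n$ forces a common neighbour anyway once $2Ln>n$. More robustly, even when $L\le 1/2$, between the two disjoint vertex sets $N(i)$ and $N(j)$ (each of size $\ge Ln$) there must be many edges unless a lot of $N(i)$–vertices have all their remaining neighbours inside $N(i)\cup\{i\}$, which is impossible once $Ln$ exceeds the size of that set for a positive fraction of vertices. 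The upshot is a large family of internally-disjoint short $i$–$j$ paths (length at most $3$), say $\Omega(n)$ of them, and by Rayleigh monotonicity / parallel-series bounds the effective resistance along $\Omega(n)$ parallel paths each of length $O(1)$ is $O(1)\cdot\frac{1}{\Omega(n)}=O(1/n)$. Summing $r_{i,j}=O(1/n)$ over the $\binom{n}{2}$ pairs yields $\Kf(G)=O(n)$ — even stronger than needed — and then $\mathcal{K}(G)=O(n)$ with a constant depending only on $L$ and $U$, which proves the proposition (taking the first term of the minimum).

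The main obstacle is making the ``many internally-disjoint short paths'' claim quantitative and uniform in $n$, particularly in the delicate sub-case $L\le 1/2$ where $N(i)$ and $N(j)$ can be disjoint: one must rule out the bipartite graph between $N(i)$ and $N(j)$ being too sparse, and extract $\Omega_L(n)$ vertex-disjoint length-$3$ paths $i$–$a$–$b$–$j$ with $a\in N(i)$, $b\in N(j)$, $\{a,b\}\in E(G)$. This is a counting/defect-version-of-Hall argument: the number of edges between $N(i)$ and $N(j)$ is at least $|N(i)|(\delta(G)-|N(i)|-1)\ge Ln(Ln-Un-1)$, which is positive only when $L>U/\!\!$-ish, so for the general case I would instead route through a common neighbour of a neighbour, or simply cite that a graph with $\delta\ge Ln$ has diameter $O_L(1)$ and edge-connectivity $\Omega_L(n)$, and invoke a known bound relating effective resistance to edge-connectivity and diameter (e.g. $r_{i,j}\le \diam(G)/\kappa'(G)$ along a fixed shortest path fattened by disjoint detours). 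Reconciling the constant $\Psi_{L,U}$ cleanly across all $0<L<U<1$, and handling small $n$ via Remark \ref{rem.Gconn}, is the bookkeeping I would expect to be fiddly but not deep. If the direct resistance estimate proves awkward, the fallback is the probabilistic route: couple the walk on $G$ with the stationary walk and show the hitting time $m_{i,j}=O(n)$ because from any vertex the walk reaches $N(j)$ within $O_L(1)$ steps with probability bounded below, then hits $j$ with probability $\ge 1/\Delta(G)=\Omega(1/n)$ per visit — giving $\mathcal{K}(G)=O(n)$ via \eqref{eq:def}.
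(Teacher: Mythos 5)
Your central claim --- that $Ln\le\delta(G)$ alone forces every effective resistance to be $O(1/n)$ and hence $\mathcal{K}(G)=O(n)$ outright, ``with no need to invoke the complement at all'' --- is false, and the paper's own Example \ref{ex:fixed diameter} is a direct counterexample. Take two copies of $K_{n/2}$ joined by a single bridge edge. This graph has $\delta(G)\sim n/2$ and $\Delta(G)\sim n/2$, so it satisfies the hypotheses of the proposition with, say, $L=0.4$ and $U=0.6$; yet every $i$--$j$ path between the two cliques must use the bridge, so $r_{i,j}\ge 1$ for $\Theta(n^2)$ pairs, the edge-connectivity is $1$ (not $\Omega_L(n)$ as you assert), and indeed $\mathcal{K}(G)\sim n^2/8$. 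The ``delicate sub-case'' you flag --- the bipartite graph between $N(i)$ and $N(j)$ being too sparse --- cannot be ruled out: in this example it consists of a single edge. Your probabilistic fallback fails for the same reason: the expected time for the walk to cross the bridge is $\Theta(n^2)$. This is precisely why the statement concerns $\min\{\mathcal{K}(G),\mathcal{K}(\overline{G})\}$ rather than $\mathcal{K}(G)$; in the example it is the complement (which contains $K_{n/2,n/2}$ minus an edge) whose Kemeny's constant is $O(n)$.

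The missing idea is a genuine dichotomy involving the complement. The paper argues: if $\mathcal{K}(G)>n\Psi_-$, then by the Cheeger-type bounds (\ref{eq.Kemjerrbd}) the bottleneck ratio $\Phi(G)$ is small, so there is a vertex set $S$ --- shown to satisfy $|S|,|S^c|=\Theta(n)$ using the two-sided degree bounds --- with very few $G$-edges across the cut $[S,S^c]_G$; consequently almost all of the $|S|\,|S^c|$ pairs are edges of $\overline{G}$, and a four-step ``coin-tossing'' argument for the random walk on $\overline{G}$ (every vertex steps into large subsets $A\subseteq S$, $B\subseteq S^c$ with probability bounded below, and from there reaches any target $x$ with probability $\Omega(1/n)$ within three more steps) shows every mean first passage time in $\overline{G}$ is $O(n)$, whence $\mathcal{K}(\overline{G})\le n\Psi_+$. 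Nothing in your outline engages with the complement, so the argument cannot be repaired by tightening constants; the approach itself must change.
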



\bigskip
\noindent
\textbf{Proof of Theorem \ref{thm.complementU}:}
First, recalling Remark \ref{rem.Gconn}, we note that it suffices to prove the result for sufficiently large $n$.
So we shall assume that $n\geq 12/(1-U)$.

Without loss of generality, assume $U>1/2$.  
Let $L=(1-U)/4$.

Let $G$ be a graph on $n$ vertices such that $\Delta(G)\leq Un$.
On the one hand, if $\delta(G)\geq Ln$, then Proposition \ref{prop.complementLU} implies
that $\min\left\{  \mathcal{K}(G), \, \mathcal{K}(\overline{G})\right\}   \;\leq \; n\Psi_{L,U}$.
On the other hand, if $\delta(G)<Ln$, then consider the complement:
\begin{align*}
   \Delta(\overline{G})+\delta(\overline{G})  \;& = \;  (n-1-\delta(G))  \,+\,(n-1-\Delta(G))
   \\
    & > \;  2n-2 -Ln-Un
    \\
   & \geq \;  2n +1 -\left(\frac{1-U}{4}\right)n   -\left(\frac{1-U}{4}\right)n   -Un  
     \\
   &  \hspace{45mm}
       \left(\text{since $3\leq\frac{(1-U)n}{4}$}\right)
   \\
   & =\;n+ 1+ \left( \frac{1-U}{2}\right)n \,.
\end{align*}
Now Corollary \ref{cor-deltaplusbound}
tells us that 
\[   \mathcal{K}(\overline{G}) \;\leq \; \left(  \frac{2n}{\left(\frac{1-U}{2}\right)n } \right) n  \;=\;  \frac{4n}{1-U} \,.  
\]
Theorem  \ref{thm.complementU} follows. 
\hfill $\Box$

\bigskip

We now introduce some notation for this section.

We shall write $V$ for $V(G)$, which is the same as $V(\overline{G})$.
However, for edges, we shall be careful to distinguish between
$E(G)$ and $E(\overline{G})$.

When $S$ and $T$ are disjoint subsets of $V$, we define $[S,T]_G$ to be the set of all edges of $G$
that have one endpoint in $S$ and one endpoint in $T$. 
The analogue in $\overline{G}$ is $[S,T]_{\overline{G}}$.

Recall from Equation (\ref{eq:eig}) that
for every graph $G$ with $n$ vertices, we have
\begin{equation}
   \label{eq.Keigensum}
   \mathcal{K}(G)  \;=\;  \sum_{i=2}^n  \frac{1}{1-\lambda_i}
\end{equation}
where $\lambda_1=1\geq \lambda_2\geq \lambda_3 \geq \ldots \geq \lambda_n$ are the eigenvalues of the 
normalized adjacency matrix $D^{-1}A$ (i.e., the transition probability matrix of the random walk on $G$).  
(Equation (\ref{eq.Keigensum})  also holds if $G$ is disconnected, on the understanding that for a disconnected graph $G$
we have $\mathcal{K}(G)=\infty$ and  $\lambda_2=1$.) 
Therefore,  we have
\begin{equation}
   \label{eq.Kembounds1}
      \frac{1}{1-\lambda_2}  \;\leq \;\mathcal{K}(G_n)   \;\leq \;  \frac{n}{1-\lambda_2} 
       \hspace{5mm}\text{for every  graph $G_n$ on $n$ vertices}.
\end{equation}

For $S\subseteq V$, let 
\[    {\rm vol}(S)\;:=\;  \sum_{v\in S} d_v \,.
\]
Note that ${\rm vol}(V)\,=\,2|E(G)|$.  For example, in a regular
graph of degree $d$, ${\rm vol}(S)=d|S|$.
The \textit{bottleneck ratio} of the graph $G$ is  defined to be
\begin{equation}
   \label{eq.defPhi}
      \Phi  \;=\;  \Phi(G)  \;=\;  \min_{S\subseteq V:  \,0<{\rm vol}(S)\leq |E(G)|} \frac{  |\,[S,S^c]_G|}{{\rm vol}(S)}  \,.
\end{equation}
The classic work of Jerrum and Sinclair \cite{JeSi} and Lawler and Sokal \cite{LaSo} proved 
\begin{equation}
   \label{eq.jerrum}
       \frac{\Phi^2}{2}  \;\leq \;  1-\lambda_2  \;\leq \;  2\Phi.
\end{equation}
(Again, we note consistency when $G$ is disconnected, since then $\Phi=0$.)
For an overview from the point of view of discrete reversible Markov chains, of which our
context is a special case,
see Sections 7.2 and  13.3 of \cite{LPW}.

Combining (\ref{eq.Keigensum}) and (\ref{eq.jerrum}) yields
\begin{equation}
    \label{eq.Kemjerrbd}
      \frac{1}{2\Phi(G)}   \;\leq \;  \mathcal{K}(G)   \;\leq \;  \frac{2n}{\Phi(G)^2}  \,.
\end{equation}

\bigskip
\noindent
\textbf{Proof of Proposition \ref{prop.complementLU}:}
Let $L$ and $U$ be the constants specified in the statement of the proposition.

We shall require a large parameter $M$ and a small parameter $\epsilon$ which we define by
\begin{equation}
   \label{eq.defMeps}
     M \; :=\;  \frac{8}{L(1-U)}    \hspace{5mm} \text{and} \hspace{5mm}
         \epsilon \; := \;  \frac{1}{M^2}  \;=\;   \frac{L^2(1-U)^2}{64} \,.
\end{equation}
We also let 
\begin{equation}
     \label{eq.psiminusdef}
         \Psi_-  \; :=\;  \frac{2}{\epsilon^2}  \,.
\end{equation}
We shall obtain another constant $\Psi_+$ with the property 
that, for the graphs under consideration,
\begin{verse}
     If $\mathcal{K}(G)   \,>\, n\Psi_-$, then $\mathcal{K}(\overline{G}) \,\leq\, n\Psi_+$.
\end{verse}
Then the proposition will follow by taking $\Psi_{LU}$ to be $\max\{\Psi_-,\Psi_+\}$.

Let $G$ be a graph with $n$ vertices such that 
$\mathcal{K}(G)>n\Psi_-$ and $Ln\leq \delta(G)\leq \Delta(G)\leq Un$.  Then
\begin{equation}
    \label{eq.degbound1}   
     Ln   \;\leq \;  d_v  \; \leq \;      Un  \hspace{10mm} \forall v\in V \,.
\end{equation}
For future reference, we also note that
\begin{equation}
   \label{eq.volSvol}
     \frac{ {\rm vol}(T)}{Un}  \;\leq \;  |T|  \;\leq \;  \frac{{\rm vol}(T)}{Ln}   \hspace{10mm} \forall \,T\subseteq V.
\end{equation} 
In the complement $\overline{G}$, we have $n-1-Un \leq \delta(\overline{G})\leq \Delta(\overline{G})\leq n-1-Ln  < (1-L)n$.
By restricting our attention to sufficiently large $n$ (it suffices that $n>2/(1-U)$), we have that
\begin{equation}
    \label{eq.degbound1c}   
     \frac{(1-U)\,n}{2}   \;\leq \;  \overline{d}_v  \; \leq \;      (1-L)\,n  \hspace{10mm} \forall \,v\in V \,.
\end{equation}
Since it suffices to prove the proposition for sufficiently large $n$, we shall henceforth assume
that $n$ is large enough so that Equation (\ref{eq.degbound1c}) holds.

Here is an outline of the proof.

\smallskip
\noindent
\textit{Step 1:}
Since $\mathcal{K}(G)$ is large, the bottleneck ratio of $G$ must be small.  This means
that there is a set $S$ of vertices such that there are relatively few edges in the cut $[S,S^c]_G$.
Moreover, we show that the sets $S$ and $S^c$ each have at least order $n$ vertices.
The idea then is that the corresponding complementary cut 
$[S,S^c]_{\overline{G}}$ contains most of the $|S|\,|S^c|$ possible edges between 
$S$ and $S^c$.  (If it contained every possible edge between $S$ and $S^c$, which happens only if
$G$ is disconnected, then Theorem \ref{thm.joinbound} would imply that 
$\mathcal{K}(\overline{G})$ would be $O(n)$.)

\smallskip
\noindent
\textit{Step 2:}  We specify two sets of vertices $A\subseteq S$ and $B\subseteq S^c$ such that every vertex in $A$
is connected (in $\overline{G}$) to most of $S^c$, every vertex in $B$ is connected to most of $S$, and
the set differences $S-A$ and $S^c-B$ are both relatively small.

\smallskip
\noindent
\textit{Step 3:}  Consider an arbitrary vertex $x$, whose  first passage
time in $\overline{G}$ we shall be investigating.
We show first that either $A$ or $B$ contains a substantial number (order $n$) of neighbours (in $\overline{G}$) of $x$.

\smallskip
\noindent   
\textit{Step 4:}  We now look at properties of the random walk on $\overline{G}$.  
We prove that there is a $\Theta>0$ (depending on $L$ and $U$ but not $n$) such that 
from every vertex of $A$ (respectively, $B$), the probability of entering $B$ (respectively, $A$) on the 
next step is at least $\Theta$.  We also prove that from any vertex, the probability that the next 
step is to $A\cup B$ is not small (in fact, at least $1/2$).

\smallskip
\noindent  
\textit{Step 5:}
Wherever the walker happens to be, there is a probability of at least order $1/n$ that vertex $x$ will be visited
within the next four steps (it is always likely to enter $A\cup B$ in one step, then it has a reasonable 
chance of visiting a neighbour of $x$ in either one or two more steps, and from that neighbour
the probability of going to $x$ is at least $1/n$).  
So the first passage time of $x$ is at least as fast as the time until the first Heads when we toss
a coin every four time steps and the probability of Heads is of order $1/n$ on each toss.
The expected time to the first Heads in this situation is order $n$, with a constant that is
independent of $x$.  The bound $\mathcal{K}(\overline{G})=O(n)$ follows.

\medskip
Now we return to the proof.

\medskip
\textit{Step 1.}
Since $\mathcal{K}(G)  \,>\, n\Psi_-$, it follows from Equations (\ref{eq.Kemjerrbd}) 
and (\ref{eq.psiminusdef}) that
$\Phi(G) \; <  \;   \epsilon$.   
Therefore, by the definition of $\Phi(G)$ in Equation (\ref{eq.defPhi}), there is a nonempty subset $S$ of $V$
such that ${\rm vol}(S)\leq |E(G)|$ and 
\begin{equation}
    \label{eq.SPhibeta}
       |\,[S,S^c]_G|   \;< \;  \epsilon \,{\rm{vol}}(S) \;\leq \;  \epsilon \,n\,U\,|S| 
\end{equation}
(the second inequality uses Equation (\ref{eq.volSvol})).  Also, we have
\begin{equation*}
   |S^c|\,Un   \;\geq \;  {\rm vol}(S^c)   \;=\; 
   2|E(G)|-{\rm vol}(S)  \;\geq \;|E(G)| \;\geq \; \frac{L\,n^2}{2},
\end{equation*}
and hence
\begin{equation}
    \label{eq.2UScn}
      \frac{2U}{L}\, |S^c| \;\geq \; n \,.
\end{equation}
    
 Let $C\,=\,  (L-\epsilon U)/2$.   
 Since $\epsilon<L$ by  Equation (\ref{eq.defMeps}), we see that
 \begin{equation}
    \label{eq.Cbound}
    \frac{L}{2} \;>\;   C  \; >  \;  \frac{L(1-U)}{2}    \;> \; 0.
\end{equation}
We claim that $|S|\,>\,n C$.   If not, then every vertex $v$ in $S$ has at most 
$nC$ neighbours in $S$ (in $G$), and hence 
\[
   |\,[v,S^c]_G| \;  \geq    \; Ln \,-\, Cn \;> \;   \epsilon U n \,.  
\]
Summing the above inequality over all $v\in S$ gives $|\,[S,S^c]_G|  \,> \, n\,U\epsilon \,|S|$, which 
contradicts Equation (\ref{eq.SPhibeta}).  This proves the claim that
\begin{equation}
   \label{eq.SgtnC}
      |S| \;>\;  nC  \,  \hspace{5mm}\text{where }C\;=\;  \frac{L-\epsilon U}{2}.
\end{equation}


\medskip

\textit{Step 2.}
Note that $M\epsilon \,=\, L(1-U)/8\,<\,1/2$ and hence
\begin{equation}
   \label{eq.Mepsbd}
       1-M\epsilon   \;>  \;  \frac{1}{2}  \,.
\end{equation}
Define the sets of vertices $A\subseteq S$ and $B\subseteq S^c$ by
\begin{eqnarray}
  \label{eq.Adef1}
  A  & = &  \{v\in S\,:\, |\,[v,S^c]_{\overline{G}}| \,\geq \, |S^c|(1-M\epsilon)    \,\}
     \\
     \label{eq.Adef2}
    & = &  \{v\in S\,:\, |\,[v,S^c]_{G}| \,\leq\, M\epsilon \, |S^c|   \, \} \, , \hspace{7mm} \text{and}
     \\
     \label{eq.Bdef1}
     B  & = &  \{w\in S^c\,:\, |\,[w,S]_{\overline{G}}| \,\geq \, |S|(1-M\epsilon)    \,\}
     \\
     \label{eq.Bdef2}
    & = &  \{w\in S^c\,:\, |\,[w,S]_{G}| \,\leq\, M\epsilon \, |S|   \, \}   \,.
\end{eqnarray}
From Equation (\ref{eq.Adef2}), it follows that 
\[       |\,[S,S^c]_G| \;\geq \;  \sum_{v\in S-A}|\,[v,S^c]_G|    \;\;\geq \;\;  |S-A|\times M\epsilon |S^c|
\]
and hence (using Equations (\ref{eq.SPhibeta}) and (\ref{eq.volSvol}), 
and the bounds ${\rm vol}(S)\leq |E(G)|\leq {\rm vol}(S^c)$)  that 
\begin{equation}
   \label{eq.SminusAbound}
     |S-A|\;\leq \;   \frac{  |\,[S,S^c]_G|}{M\epsilon \,|S^c|}  \;\leq \; \frac{\epsilon \, {\rm vol}(S)}{M\epsilon \,({\rm vol}(S^c)/Un)}  
         \;\leq \;  \frac{nU}{M}\,.
\end{equation} 
Similarly, we have     
\[       |\,[S,S^c]_G| \;\geq \;  \sum_{w\in S^c-B}|\,[w,S]_G|    \;\;\geq \; \;  |S^c-B|\times M\epsilon |S|
\]
and hence (using Equation (\ref{eq.SPhibeta}))   that 
\begin{equation}
   \label{eq.ScminusBbound}
     |S^c-B|\;\leq \;   \frac{  |\,[S,S^c]_G|}{M\epsilon \,|S|}  \;\leq \;
      \frac{nU}{M}   \,.
\end{equation} 
By Equations  (\ref{eq.SminusAbound}), (\ref{eq.SgtnC}),  (\ref{eq.ScminusBbound}), and (\ref{eq.2UScn}),  we have
\begin{eqnarray}
  \nonumber 
    |A|  &\geq &  |S|\,-\, n\frac{U}{M}  \;\; \geq \; n\left( C -  \frac{U}{M}  \right)   \quad\text{ and}
    \\
      \nonumber 
    |B|  &\geq &  |S^c|\,-\, n\frac{U}{M}  \;\; \geq \; n\left(  \frac{L}{2U} -  \frac{U}{M}  \right) \,.  
\end{eqnarray}
We note the above two lower bounds are strictly positive because
\begin{equation}
  \label{eq.U2MLCbound}
   \frac{U}{M}\;=\;  \frac{UL(1-U)}{8}   \;<\;  \frac{C}{4} \;<\; \frac{L}{8U} \hspace{6mm}\text{by Eq.\ (\ref{eq.Cbound})}.
\end{equation}

\medskip

\textit{Step 3.}
Now consider an arbitrary vertex $x\in V$.  We want to show that the expected time for a random walk
on $\overline{G}$ to reach $x$ from 
any other initial vertex $v$ is bounded by $n\Psi_+$ uniformly in $x$ and $v$.

We now introduce some notation.  
For a vertex $w\in V$, as $N(w)$ is the neighbourhood of $w$ in $G$, let $\overline{N}(w)$ be the neighbourhood of $w$
in $\overline{G}$:
\begin{eqnarray*}
    N(w)   & =&   \{  u\in V\,:\,  \{w,u\}\in E(G)\,\}    \hspace{5mm}\text{and}
    \\
      \overline{N}(w)   & = &   \{  u\in V\,:\,  \{w,u\}\in E(\,\overline{G}\,)\,\}\,.
\end{eqnarray*}
Next, we define the set of neighbours in $\overline{G}$  of $x$ in $A$ and in $B$ respectively: 
\[
     A_x  \;=\;  A\cap \overline{N}(x)    \hspace{5mm}\text{and}\hspace{5mm}  B_x\;=\;  B\cap \overline{N}(x) \,,
\]
Then we have
\[  |A_x|\,+\,|B_x|  \;\leq \;  \overline{d}_x  \;\leq \;  |A_x|\,+\,|B_x|\,+\,|S-A|  \,+\,|S^c-B|\,.
\]
Therefore, by Equations (\ref{eq.degbound1c}), (\ref{eq.SminusAbound}),  and (\ref{eq.ScminusBbound}),    
\begin{eqnarray}
  \nonumber
       |A_x|\,+\,|B_x|  & \geq &   \overline{d}_x  \,-\,  |S-A|  \,-\,  |S^c-B|
       \\
 \nonumber  
       & \geq & n\, \frac{1-U}{2}  \,-\,  2n\,\frac{U}{M}
          \\
                    \label{eq.AxBxbd2}  
          &  = &   2n\Gamma \,,  
          \\  
           \nonumber
          & &  \hspace{5mm} \text{where}\hspace{5mm}
     \Gamma \;:=\;  \frac{1}{2}     \left(\frac{1-U}{2} -\frac{2U}{M} \right)  \,.    
\end{eqnarray}
Since
\[   \frac{2U}{M}  \;=\;  \frac{UL(1-U)}{4}  \;<\;   \frac{1-U}{4} ,  
\]
it follows that 
\begin{equation}
   \label{eq.Gammabound}
        \Gamma \;>\;  \frac{1-U}{8} \,.
\end{equation}
We see immediately from Equation (\ref{eq.AxBxbd2}) that
\begin{equation}
  \label{eq.AxorBx}
   \text{At least one of  $|A_x|$ or $|B_x|$ is greater than or equal to $n\Gamma$.}
\end{equation}

\medskip

\textit{Step 4.}
To describe the random walk on the complement,
we shall slightly modify the Markov chain notation
introduced in Section \ref{sec:notation}.
The random walk process on $\overline{G}$ is a sequence 
$X_0,X_1,X_2,\ldots$ of $V$-valued random variables, with one-step transition probabilities 
given by 
\[
     \overline{P}(v,w)  \;=\;   \Pr(X_1\,=\,w\,|\,X_0=v)   \;=\;
     \begin{cases}    \left(\overline{d}_v\right)^{-1}  
     & \text{if }\{v,w\}\in E(\overline{G}) 
     \\   0 & \text{otherwise} .  \end{cases}
\]
For $t\in \mathbb{N}$,  the $t$-step transition probabilities  
are $\overline{P}^{(t)}(v,w)\,=\,\Pr(X_t=w\,|\,X_0=v)$.

For $w\in B$, we have
\begin{eqnarray}
     \nonumber
     \overline{P}(w,A)    & = &   |\,[w,A]_{\overline{G}}| \, / \, \overline{d}_w
      \\
      \nonumber
      & \geq & \frac{ |\,[w,S]_{\overline{G}}| \,-\, |S-A| }{n(1-L)}  
          \hspace{15mm}\text{(by Eq.\  (\ref{eq.degbound1c}))}      
  \\
      \nonumber
        & \geq & \frac{ |S|(1-M\epsilon) \,-\,n\frac{U}{M}  }{n(1-L)}   
           \hspace{12mm}\text{(by  Eqs.\ (\ref{eq.Bdef1}) and  (\ref{eq.SminusAbound}))}
           \\
      \nonumber
        & \geq & \frac{ nC(1-M\epsilon) \,-\,n\frac{U}{M}  }{n(1-L)}   
            \hspace{12mm}\text{(by  Eq.\  (\ref{eq.SgtnC}))}    
         \\
      \label{eq.PwAbound}
        & = & \frac{ C(1-M\epsilon) \,-\,\frac{U}{M}  }{1-L} \quad  =:\;  \Theta. 
\end{eqnarray}
We know that $\Theta>0$ from  Equations (\ref{eq.Mepsbd}) and (\ref{eq.U2MLCbound}).  
Similarly, for $v\in A$,
\begin{eqnarray}
     \nonumber
     \overline{P}(v,B)    & = &   |\,[v,B]_{\overline{G}}| \, / \, \overline{d}_v
      \\
      \nonumber
      & \geq & \frac{ |\,[v,S^c]_{\overline{G}}| \,-\, |S^c-B| }{n(1-L)}   
         \hspace{15mm}\text{(by Eq.\  (\ref{eq.degbound1c}))}
      \\
      \nonumber
        & \geq & \frac{ |S^c|(1-M\epsilon) \,-\,n\frac{U}{M}  }{n(1-L)}   
           \hspace{15mm}\text{(by Eqs.\ (\ref{eq.Adef1}) and  (\ref{eq.ScminusBbound}))}
           \\
      \nonumber
        & \geq & \frac{ \frac{nL}{2U}(1-M\epsilon) \,-\,n\frac{U}{M}  }{n(1-L)}   
          \hspace{15mm}\text{(by Eq.\ (\ref{eq.2UScn}))}         
         \\
     \nonumber
        & \geq  & \frac{ C(1-M\epsilon) \,-\,\frac{U}{M}  }{1-L}   \hspace{15mm}\text{(by Eq.\ (\ref{eq.U2MLCbound}))}
        \\
           \label{eq.PvBbound}
        & = &  \; \Theta.
\end{eqnarray}

For every $z\in V$,   
\begin{eqnarray}
   \nonumber
      \overline{P}(z,(A\cup B)^c)   & \leq &   (|S-A|\,+\,|S^c-B|) \,/\, \overline{d}_z
      \\
      \nonumber
          & \leq & \frac{ 2nU/M }{n(1-U)/2} 
             \hspace{10mm}\text{(by Eqs.\ (\ref{eq.SminusAbound}), (\ref{eq.ScminusBbound}),
              and (\ref{eq.degbound1c}))}
      \\
   \nonumber   
    & = &   \frac{4U}{M(1-U)} \,.
\end{eqnarray}
Therefore
\begin{equation}
    \label{eq.PzABbound}
    \overline{P}(z, A\cup B)   \;\geq \;  \Upsilon \hspace{5mm}
    \text{for every vertex $z$, where}\hspace{5mm}
  \Upsilon\,:= \,  1\,-\,    \frac{4U}{M(1-U)}\,. 
\end{equation}
Note that $\Upsilon>0$ because $\frac{4U}{M(1-U)} \,=\, \frac{UL}{2}\,<\,\frac{1}{2}$.

\medskip

\textit{Step 5.}
Now we consider the two cases in Equation (\ref{eq.AxorBx}).
\begin{verse}
    \underline{Case 1}:  $|A_x| \;\geq \; n\Gamma$;  
    \\
    \underline{Case 2}:  $|B_x| \;\geq \, n\Gamma$.
\end{verse}

First we assume that Case 1 holds, i.e.\ that $|A_x|\geq n\Gamma$.
(The proof for Case 2 will be very similar.)
In this case,  for $w\in B$ we have
\begin{align*}
     \{ t\in A_x : t\not\in \overline{N}(w) \} & \;\subseteq \;   \{t\in S: t\not\in \overline{N}(w)\}
     \\
     &\; = \; \{ t\in S:  t\in N(w)\}
       \\
       &\; = \;  [w,S]_G \,,
\end{align*}
and hence, using Equation  (\ref{eq.Bdef2}),
\[      |[w,A_x]_{\overline{G}}|   \;\geq \;   |A_x|\,-\,  |[w,S]_G|   \;\geq \; n\Gamma-M\epsilon |S| \;\geq \; n\Gamma \,-\ nM\epsilon.
\]
Therefore, for all $w\in B$, we have from Equations    (\ref{eq.defMeps}) and   (\ref{eq.degbound1c}) that
\begin{align}
      \overline{P}(w,A_x) \; & \geq  \;     \frac{n\Gamma-nM\epsilon}{ \overline{d}_w}
      \nonumber
      \\
      \nonumber
      & \geq \;   \frac{\Gamma - \frac{L(1-U)}{8} }{1-L}
      \\
      \nonumber
      & \geq \;  \frac{\Gamma - L\Gamma}{1-L}   \hspace{5mm}\text{(by Eq.\ (\ref{eq.Gammabound}))}
      \\
      \label{eq.PwAxbound}
      & = \;  \Gamma \,.
\end{align}

Since $\overline{P}(u,x)\geq 1/n$ for every $u$ in $A_x$, we see that for every $w\in B$ we have
\begin{align}
     \nonumber
        \overline{P}^{(2)}(w,x)  \; & \geq \;   \sum_{u\in A_x}  \overline{P}(w,u)\,\overline{P}(u,x)
        \\
        \nonumber
        & \geq \;   \sum_{u\in A_x}  \overline{P}(w,u)\,\frac{1}{n}    
        \\
        \nonumber
        & = \; \overline{P}(w,A_x)  \,\frac{1}{n}     
        \\
        \label{eq.P2xbound} 
        & \geq \; \frac{\Gamma}{n}        \hspace{12mm}\text{(by Equation (\ref{eq.PwAxbound}))}.  
\end{align}            
Next, for $v\in A$, we have 
\begin{eqnarray}
            \nonumber
        \overline{P}^{(3)}(v,x)  & \geq &   \sum_{w\in B}  \overline{P}(v,w)\,\overline{P}^{(2)}(w,x)
        \\
        \nonumber
        & \geq &   \sum_{w\in B}  \overline{P}(v,w)\,\frac{\Gamma}{n}      \hspace{11mm}\text{(by Equation (\ref{eq.P2xbound}))}   
        \\
        \nonumber
        & = & \overline{P}(v,B)  \,\frac{\Gamma}{n}  \,.      
\end{eqnarray}                       
Hence it follows from
Equation (\ref{eq.PvBbound}) that
\begin{equation}
   \label{eq.P3Axbound}
      \overline{P}^{(3)}(v,x)  \;\geq \;  \frac{\Theta\,\Gamma}{n}   \hspace{7mm}\text{for all }v\in A.   
\end{equation}     

The first passage time of the vertex $x$ by the random walk on 
$\overline{G}$ is the random variable $T_x$
defined by
\[     T_x \;=\;  \min\{  t\geq 1\,:  \;  X_t=x \}  \,.
\]
Then Equations (\ref{eq.P2xbound}) and (\ref{eq.P3Axbound}) imply that
\begin{equation}
    \label{eq.tauxAB}
    \overline{P}(T_x\leq 3 \,|\, X_0=w)  \;\geq \;  \frac{\Theta\,\Gamma}{n}    \hspace{7mm}\text{for all }w\in A\cup B.  
\end{equation}
Next, for every $s\in V$, we have
\begin{eqnarray}
   \nonumber
   \overline{P}(T_x \leq 4  \,|\,X_0=s)  & \geq &
        \sum_{w\in A\cup B}\overline{P}(s,w)\,\overline{P}(T_x\leq 3\,|\,X_0=w)
      \\
      \nonumber
      & \geq &   \overline{P}(s,A\cup B) \, \frac{\Theta\,\Gamma}{n}    \hspace{11mm}\text{(by Equation (\ref{eq.tauxAB}))}   
      \\
      \label{eq.tau4bound}
      & \geq & \frac{\Upsilon \,\Theta\,\Gamma}{n}      \hspace{11mm}\text{(by Equation (\ref{eq.PzABbound}))}.  
\end{eqnarray}
Therefore
\[  
     \overline{P}(T_x >4   \,|\,X_0=s)   \;\leq \;  1\,-\, \frac{\Upsilon\,\Theta\,\Gamma}{n}   \hspace{6mm}\text{for every }s\in V,  
\]
and consequently, for every $s\in V$ and $j> 0$, 
\begin{align*}   
     \overline{P}(T_x  >4 +j  \,|\,X_0=s )
      \; & = \; 
        \sum_{y\in V, \,y\neq x}   \overline{P}(T_x >4 +j  \,|\,X_0=s,\,X_j=y, \,T_x>j)  \overline{P}(X_j=y, \,T_x >j  \,|\,X_0=s)  
        \\
        & = \;    \sum_{y\in V, \,y\neq x}   \overline{P}(T_x >4  \,|\,X_0=y)   \,      \overline{P}(X_j=y, \,T_x >j  \,|\,X_0=s)  
        \\
        & \hspace{80mm}\text{(by the Markov property)}
       \\
       & \leq \;  \left(1\,-\, \frac{\Upsilon\,\Theta\,\Gamma}{n} \right)  \,    \overline{P}(T_x >j  \,|\,X_0=s)   \,.           
\end{align*}
By induction on $k\in \mathbb{Z}^+$, we obtain
\begin{equation}
   \label{eq.Ptau4k}
     \overline{P}(T_x >4k  \,|\,X_0=s)   \;  \leq \;  \left(1\,-\, \frac{\Upsilon\,\Theta\,\Gamma}{n}\right)^k  \hspace{6mm}\text{for every }s\in V,\,k\geq 0.
\end{equation}      
Using that fact that for any nonnegative random variable $Z$
\[     \mathbb{E}(Z)   \;\leq \;  \mathbb{E}(\lceil Z \rceil)  \;=\;  \sum_{k=0}^{\infty}  \Pr(Z>k)  \,,
\]
we obtain (writing $\overline{\mathbb{E}}$ for the expectation with respect to $\overline{P}$)
\begin{eqnarray}
  \nonumber
  \overline{\mathbb{E}}\left( \left. \frac{T_x}{4}\,\right| \,X_0=s\right)   & \leq  & 
    \sum_{k=0}^{\infty}  \overline{P}\left( \left. \frac{T_x}{4} > k\,\right|\,X_0=s\right)
  \\
  \nonumber
  & \leq &   \sum_{k=0}^{\infty}  \left(1\,-\, \frac{\Upsilon\,\Theta\,\Gamma}{n}\right)^k    
     \hspace{7mm}\text{(by Eq.\ (\ref{eq.Ptau4k}))}
  \\
    \nonumber 
  & = &   \frac{n}{\Upsilon\,\Theta\,\Gamma}  \hspace{12mm}\text{for every }s\in V. 
\end{eqnarray}   
To summarize:
\begin{equation}
  \nonumber  
   \text{If Case 1 holds, then \; $\overline{\mathbb{E}}(T_x\,|\,X_0=s)  \;\leq \;  \frac{  4n}{\Upsilon \,\Theta \,\Gamma}$ \;
   for every $s\in V$.}
\end{equation}

Now we turn to  Case 2, and assume that $|B_x|\geq n\Gamma$.  
The approach is very similar to Case 1.
We have for $v\in A$ that
\[
     \{ u\in B_x : u\not\in \overline{N}(v) \}  \;\subseteq \;   \{u\in S^c: u\not\in \overline{N}(v)\}
   \; = \;  [v,S^c]_G \,,
\]
and hence, using Equation  (\ref{eq.Adef2}),
\[      |[v,B_x]_{\overline{G}}|   \;\geq \;   |B_x|\,-\,  |[v,S^c]_G|   \;\geq \; n\Gamma-M\epsilon |S^c| \;\geq \; n\Gamma \,-\ nM\epsilon.
\]
This gives the following analogue of  Equation (\ref{eq.PwAxbound}):
\begin{equation*}
      \overline{P}(v,B_x)      \; \geq \; \Gamma \hspace{6mm} \text{for all $v\in A$. }     
\end{equation*}
Similarly to Case 1, we deduce 
\[   
     \overline{P}^{(2)}(v,x)  \;\geq \;  \frac{\Gamma}{n}   \hspace{6mm} \forall \, v\in A      
\]
and, using Equation (\ref{eq.PwAbound}), 
\begin{equation*}
      \overline{P}^{(3)}(u,x)  \;\geq \;  \frac{\Theta\,\Gamma}{n}   \hspace{8mm}\text{for all }u\in B.
\end{equation*}
Therefore
\begin{equation*}
    \overline{P}(T_x\leq 3 \,|\, X_0=w)  \;>\;  \frac{\Theta\,\Gamma}{n}    \hspace{7mm}\text{for all }w\in A\cup B, \quad\text{and}
\end{equation*}
\begin{equation*}
     \overline{P}(T_x \leq 4  \,|\,X_0=s)   \;\geq \;  \frac{\Upsilon\,\Theta\,\Gamma}{n} \hspace{6mm}\text{for every }s\in V.
\end{equation*}      
As in Case 1, we conclude that
\begin{equation*}
   \text{If Case 2 holds, then \; $\overline{\mathbb{E}}(T_x\,|\,X_0=s)  \;\leq \;  \frac{  4n}{\Upsilon \,\Theta\,\Gamma}$ \;
   for every $s\in V$.}
\end{equation*}       

Combining Cases 1 and 2 tells us that 
\begin{equation*}
   \overline{\mathbb{E}}(T_x\,|\,X_0=s)  \;\leq \;   
   \frac{  4n}{\Upsilon \,\Theta \,\Gamma}  
   \hspace{5mm}  \text{for every $s\in V$.}         
\end{equation*}    
Since the upper bound is independent of $x$, we conclude that 
\[   \mathcal{K}(\overline{G})   \;\leq \;  n  \Psi_+\,  
    \hspace{5mm}\text{where } \Psi_+ \;=\;   \frac{4}{\Upsilon\,\Theta \,\Gamma } \,.
\] 
As explained near the beginning of this proof, the proposition now follows upon defining $\Psi_{L,U}$ to be $\max\{ \Psi_-,\Psi_+\}$.
\hfill  $\Box$

\bigskip
\begin{remark}
Upon chasing through our inequalities (specifically (\ref{eq.Cbound}), (\ref{eq.Mepsbd}), (\ref{eq.U2MLCbound}), (\ref{eq.Gammabound}), and the line following (\ref{eq.PzABbound})), 
one finds that $\Psi_+\leq 512\, L^{-1}(1-U)^{-2}$, which is less 
than $\Psi_-=2\,\epsilon^{-2}$.  So the above proof actually gives the value $\Psi_{L,U}=2\,\epsilon^{-2}$.
\end{remark}

\section{Kemeny's constant for the join of two graphs}\label{sec:join}

Let $H_1$ and $H_2$ be two graphs (not necessarily connected) 
with disjoint vertex sets $V(H_1)$ and $V(H_2)$.  
The \textit{join} $H_1\vee H_2$
is the graph obtained from the disjoint union of $H_1$ and $H_2$ by adding all of the edges joining a vertex of $H_1$ to a vertex of $H_2$.

\begin{theorem}
     \label{thm.joinbound}
Let $H_1$ and $H_2$ be graphs with $n_1$ and $n_2$ vertices respectively.  
Let $G=H_1\vee H_2$ be their join, and let $n=n_1+n_2$.  
Then $\mathcal{K}(G)  \,\leq \, 3n$.   
\end{theorem}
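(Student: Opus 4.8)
The plan is to use inequality \eqref{inequality1} — that $\mathcal{K}(G) < \sum_{j\neq i} d_j r_{i,j}$ for \emph{every} vertex $i$ — together with effective‑resistance estimates that exploit how densely a join is connected. First some reductions. A join is always connected with $\diam(G)\le 2$, and $\Delta(G)=\max\{\Delta(H_1)+n_2,\Delta(H_2)+n_1\}$ while $\delta(G)\ge\min\{n_1,n_2\}$, so $\Delta(G)+\delta(G)\ge n$ for every join. If $n_1=1$ or $n_2=1$, or more generally if $H_1$ or $H_2$ has a vertex adjacent to all other vertices of that factor, then $\Delta(G)=n-1$ and the corollary to Theorem~\ref{thm:upper bound d f} giving $\mathcal{K}(G)<2(n-1)$ when $\Delta(G)=n-1$ already finishes the proof. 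So assume $n_1,n_2\ge 2$ and $\Delta(G)\le n-2$, and take $i$ to be a vertex of maximum degree in $G$.

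The resistances are controlled by counting short parallel paths and by Rayleigh monotonicity. If $j$ lies in the same factor as $i$, the $n_1$ (or $n_2$) vertices of the other factor provide that many internally disjoint length‑$2$ $i$–$j$ paths, so $r_{i,j}$ is of order $1/n_{\text{other}}$. If $j$ lies in the other factor, $i$ and $j$ are adjacent, and moreover every neighbour of $j$ inside $j$'s factor is a common neighbour of $i$ and $j$ (since $i$ is joined to all of that factor), giving $\deg_H(j)$ further length‑$2$ paths; these suffice when $j$'s factor is dense, and when it is sparse one instead bounds $r_{i,j}\le r^{\,G'}_{i,j}$ for a suitable spanning subgraph $G'$ (delete the edges of $i$'s factor, making its vertices mutual twins, and route current through them via length‑$3$ detours). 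Substituting these bounds into $\sum_{j\neq i} d_j r_{i,j}$, splitting the sum over the two factors, and using $\sum_{v\in V(H_k)}\deg_{H_k}(v)=2|E(H_k)|\le n_k(n_k-1)$ to keep the ``internal'' part of each $d_j$ tied to the (small) resistance rather than crudely bounding $d_j\le n-1$, one arrives at $\mathcal{K}(G)<3n$. In the single regime where these resistance bounds degrade — one factor much smaller than the other while the larger one is dense — $G$ is close to complete and Corollary~\ref{cor-deltaplusbound} applies directly, since $\Delta(G)+\delta(G)\ge n$.

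The step I expect to be the real obstacle is not getting $\mathcal{K}(G)=O(n)$ — several crude estimates already do that — but extracting the clean constant $3$: one must avoid the lossy bound $d_j\le n-1$ wherever $r_{i,j}=\Theta(1/n)$, keeping degrees and resistances correlated, and must carefully balance the ``same factor'' and ``other factor'' contributions to the sum. A sharpness benchmark to bear in mind is the family $G=(K_p\sqcup K_p)\vee K_1$ (and its near‑relatives in which neither factor has a dominating vertex), for which a direct spectral computation gives $\mathcal{K}(G)\sim\tfrac32 n$; so the argument has only about a factor‑$2$ of slack. A purely probabilistic route is also available: since $\mathcal{K}(G)\le\min_i\max_{j\neq i} m_{i,j}$, it suffices to show that from a maximum‑degree vertex $i$ every mean first‑passage time $m_{i,j}$ is at most $3n$, which one can organize by a renewal argument counting returns of the walk to $V(H_1)$ and bounding the lengths of the excursions into $V(H_2)$ between consecutive returns.
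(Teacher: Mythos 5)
Your write-up is a strategy rather than a proof: the step that actually \emph{is} the theorem --- verifying that the two halves of $\sum_{j\neq i}d_jr_{i,j}$ from \eqref{inequality1} really sum to less than $3n$ in every regime --- is asserted, not carried out, and you concede as much when you identify "extracting the clean constant $3$" as the real obstacle and note there is "only about a factor-$2$ of slack." Concretely: for $i\in V(H_1)$ of maximum degree and $j\in V(H_2)$, the common-neighbour bound you describe gives $r_{i,j}\leq 2/(2+\deg_{H_2}(j))$ while $d_j=n_1+\deg_{H_2}(j)$, so $d_jr_{i,j}\leq 2(n_1+\deg_{H_2}(j))/(2+\deg_{H_2}(j))$; summed over $V(H_2)$ this can be as large as roughly $n_1n_2$, which dwarfs $3n$ whenever $H_2$ is sparse and both sides are large (already for $K_{n/2,n/2}$ the common-neighbour count of a cross pair is zero). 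The proposed repair --- length-$3$ detours through a modified graph $G'$ --- is exactly the delicate part and is never executed, nor is the claim that the same-factor and cross-factor contributions balance below $3n$ across all intermediate density regimes. The fallback to Corollary~\ref{cor-deltaplusbound} does not close the gap either: for a join one is only guaranteed $\delta(G)+\Delta(G)-n+1\geq 1$, so that corollary yields the bound $2\delta(G)\,n$, which is $\Theta(n^2)$ for $K_{n/2,n/2}$; the assertion that it "applies directly" precisely in the regimes where your resistance estimates degrade is unsupported.

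For comparison, the paper's proof is probabilistic and needs no density case analysis. It establishes the pointwise bound $\mathbb{E}_y(T_x)\leq n+2\,\mathbb{E}_x(T_x)$ for \emph{every} pair $x,y$, using the structural fact that at the first step of the walk from $V(H_2)$ into $V(H_1)$ the walker lands uniformly on $V(H_1)$, together with an injection on walks showing the chain reaches $V(H_2)$ before $x$ with probability at least $1/2$ and the estimate $\mathbb{E}_v(\theta)\leq n-1$ for the crossing time. Averaging against $\pi$ and using $\mathbb{E}_x(T_x)=1/\pi(x)$ then gives $n+2n=3n$ for any join. Your closing suggestion of a renewal argument from a maximum-degree vertex is in this spirit, but it targets the uniform bound $\max_{j}m_{i,j}\leq 3n$, which is genuinely stronger than what the paper needs (its bound $n+2/\pi(x)$ is far from uniform in $x$ and is only tamed by the $\pi$-weighted average), and you give no argument for it. As it stands, the proposal does not constitute a proof of the theorem.
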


\bigskip
\noindent
\textbf{Proof:}  
For $i\in\{1,2\}$, let $V_i$ be the set of vertices in $H_i$. Also  
let $V=V_1\cup V_2$.

We shall use the Markov chain notation of Section \ref{sec:notation} 
for the random walk on $G$.
Recall in particular that for a vertex $x$, the random variable $T_x$
is the first passage time of $x$.

The proof of the theorem is based on the following claim:
\begin{verse}
\underline{Claim:}  For every $x,y\in V$,   $\mathbb{E}_y(T_x) \,\leq\,   n\,+\,2\,\mathbb{E}_x(T_x)$.
\end{verse}
Indeed, suppose the claim is true. Let $y\in V$. Then 
\begin{align*}
    \mathcal{K}(G)  \; & =\;  \sum_{x\in V\setminus \{y\}} 
    \pi(x) \,\mathbb{E}_y(\tau_x)  
    \\
    & \leq \;  \sum_{x\in V}  \pi(x)  \left( n \,+\,\,\frac{2}{\pi(x)} \right)   \hspace{5mm}  \hbox{(by Equation (\ref{eq.piprop}))}
        \\
    &    = \;   n\,+\,2n \,,
\end{align*}
which proves the theorem.

Now we shall prove the claim.  Let $x,y\in V$.  The statement of the claim is trivial if $x=y$, so assume $x\neq y$.

Without loss of generality, assume $x\in V_1$.  

Let $T[V_2]$ and $\theta$ be the random times defined by 
\begin{align*}  
  T[V_2]  \; & =\;  \min\{ t\geq 0\,:\, X_t\in V_2\} \hspace{5mm}\text{and}
   \\
   \theta \; & = \;  \min\{s\geq T[V_2]\,:  \, X_{s+1}\in V_1 \} \,.
\end{align*}
That is, $T[V_2]$ is the first time that the random walk visits $H_2$, and
 $\theta$ is the time immediately before the first step from $H_2$ to $H_1$.
We observe that
\[    \theta \;=\;  \min\{  s\geq 0:  X_s\in V_2 \text{ and }X_{s+1}\in V_1\} \,.
\]

The following observation about $\theta$ is key.  Since every vertex of $H_2$ is connected to every vertex of $H_1$, 
and since each step of the Markov chain chooses among neighbours of the current vertex with equal probability, it follows
that the state of the Markov chain at time $\theta+1$ is \textit{equally likely to be any of the $n_1$ vertices of $H_1$}.
More formally, if $k\geq 0$  and $D$ is any event that depends only on $\{X_0,X_1,\ldots,X_k\}$, then we have
\begin{equation}
   \label{eq.PDtheta}
     P_y(   X_{k+1}=v\,|\, D \text{ and }\theta=k)  \;=\;  \frac{1}{n_1}  \hspace{5mm}  \forall \, v\in V_1.
\end{equation}
Furthermore, the Markov property implies that 
\begin{equation}
 \nonumber 
   \mathbb{E}_y\left(  T_x \,|\,\theta<T_x, \,\theta=k, \, X_{k+1}=v \right)  \;=\;  
     \begin{cases}  k+1  & \text{if }v=x   \\
          k+1 + \mathbb{E}_v(T_x)  & \text{if }v\in V_1\setminus\{x\},
          \end{cases}
\end{equation}
and hence (using Equation (\ref{eq.PDtheta}) with $D$ being the event that $\theta<T_x$)
\begin{equation}
    \label{eq.Eytauxmarkov}
    \mathbb{E}_y(T_x \,|\, \theta<T_x, \,\theta=k)  \,=\,  k+1+S   \hspace{5mm}\text{where }\quad 
       S \,:=\, \frac{1}{n_1}\sum_{v\,\in \, V_1\setminus\{x\}} \!\! \mathbb{E}_v(T_x) \,.
\end{equation}

Now we shall prove 
\begin{equation}
   \label{eq.thetalesstau}
     P_y(\theta < T_x)  \;=\;   P_y(T[V_2]<T_x) \;\geq \;   \frac{1}{2}   \hspace{5mm}\forall \, y\in V.
\end{equation}
The equality in (\ref{eq.thetalesstau}) is true because the relation $\theta<T_x$ holds if and only if $T[V_2]<T_x$, 
i.e.\ if and only if the chain does not visit $x$ before it first
enters $V_2$
(we use the assumption that $x\in V_1$; note that $T[V_2]=0$ if $X_0\in V_2$).  
For the inequality, we see that $P_y(\theta<T_x)=1$ whenever 
$y\in V_2$, so assume that $y\in V_1$.  
For $k\in \mathbb{N}$, $a\in V_1$, and $b\in V_1\cup V_2$, let $\mathcal{U}_k[a\rightarrow b]$ be the set of all $k$-step walks 
$\underline{\beta}=(\beta_0,\ldots,\beta_k)$ in $G$
such that $\beta_0=a$, $\beta_k=b$, and $\beta_i\in V_1\setminus \{b\}$ for all $i=1,\ldots,k-1$.  
That is, a walk in $\mathcal{U}_k[a\rightarrow b]$ has its first visit to  $b$ (or its first return, if $a=b$) at time $k$, and
does not visit $H_2$ before this time. 
In particular, for any walk in this set we must have $T[V_2]\geq k$, with equality possible only if $b\in V_2$.
Moreover, we have
\[
    P_a(T_b<T[V_2] \hbox{ and }T_b=k)  \;=\;   \sum_{\underline{\beta}\, \in\, \mathcal{U}_k[a\rightarrow b]} P_a(\underline{\beta})
       \hspace{12mm}\text{for }a,b\in V_1.
\]
Fix $u\in V_2$.  We now define a function $f$ from $\bigcup_{k=1}^{\infty}\mathcal{U}_k[y\rightarrow x]$ to 
$\bigcup_{k=1}^{\infty}  \mathcal{U}_k[y\rightarrow u]$ as follows.
For each $k\geq 1$ and $\underline{\beta}=(\beta_0,\ldots,\beta_k)\in \mathcal{U}_k[y\rightarrow x]$, let
$f(\underline{\beta})=(\beta_0,\ldots,\beta_{k-1},u)$.
We know that $f(\underline{\beta})$ is a walk in $G$ because 
$\beta_{k-1}\in V_1$  and $u\in V_2$.
Observe that $f$ is one-to-one and that 
$P_y(\underline{\beta}) \,=\, P_y\left(f(\underline{\beta})\right)$ for every $\underline{\beta}$.
Therefore 
\begin{align*}
  P_y(T_x=k<T[V_2])  \;  & = \; \sum_{\underline{\beta}\,\in\, \mathcal{U}_k[y\rightarrow x]} P_y(\underline{\beta})
    \\
    & = \;  \sum_{\underline{\beta}\,\in\, \mathcal{U}_k[y\rightarrow x]} P_y(f(\underline{\beta}))    
      \\
    & \leq   \;  P_y(T_u=T[V_2]=k<T_x )
    \\
     & \leq  \;  P_y(T[V_2]=k<T_x ) \,.
\end{align*}
Summing the resulting inequality over $k$ shows that 
\[    P_y(T_x<T[V_2])  \;\;  \leq \; \;  P_y(T[V_2]<T_x ) \,,
\]
and Equation (\ref{eq.thetalesstau}) follows.

Next we shall show that 
\begin{equation}
     \label{eq.Ethetabound}
     \mathbb{E}_v(\theta)  \;\leq\; n-1   \hspace{5mm}\text{for every }v\in V.
\end{equation}
Notice that for every $u\in V_2$, $P(u,V_1)=n_1/d_u  \,\geq \, n_1/(n-1)$.  Therefore, for $v\in V_2$, we have
$P_v(\theta\geq 1) \,=\,1-P(v,V_1) \,\leq\, 1-\frac{n_1}{n-1}$ and, for every $k\in \mathbb{N}$, 
$P_v(\theta\geq k+1\,|\,\theta\geq k)\,\leq \,1-\frac{n_1}{n-1}$.
By induction, we then see that
\[     P_v(\theta \geq k) \;\leq \;  \left(1-\frac{n_1}{n-1}\right)^{k}  \hspace{5mm}\forall \,k\in \mathbb{Z}^+,
\]
and hence
\begin{equation}
   \label{eq.EvthetaH2}
    \mathbb{E}_v(\theta)  \;=\;  \sum_{k=1}^{\infty}  P_v(\theta\geq k)    \;\leq \;  \frac{  1-\frac{n_1}{n-1}}{ 1- \left(1-\frac{n_1}{n-1}\right)}  \;=\; \frac{n-1}{n_1}   - 1
    \hspace{6mm}\text{if }v\in V_2.
\end{equation}
Now suppose $v\in V_1$. 
Then a similar argument to the above (but using the relation 
$P_v(T[V_2]\geq 1)=1$) shows that $\mathbb{E}_v(T[V_2])\,\leq \,(n-1)/n_2$ for the case that $v\in V_1$.  
Now consider splitting the time interval from 0 to $\theta$ into the 
part before $T[V_2]$ and the part after $T[V_2]$.
With this viewpoint, and using Equation (\ref{eq.EvthetaH2}), it is not hard to see that 
\begin{align*}
    \mathbb{E}_v(\theta)  \; & =\; \mathbb{E}_v(T[V_2])  \,+\,  
    \sum_{w\,\in\, V_2}P_v(X_{T[V_2]}=w) \, \mathbb{E}_w(\theta)
    \\
    & \leq  \; \frac{n-1}{n_2} \,+\, \frac{n-1}{n_1} -1
    \quad  \leq \;  (n-1)\left( 1+\frac{1}{n-1}  \right) -1
    \quad  =  \; n-1
    \hspace{7mm}\text{if }v\in V_1.
\end{align*}
This completes the proof of Equation (\ref{eq.Ethetabound}).    

From Equation (\ref{eq.Eytauxmarkov}), we see that $\mathbb{E}_x(T_x\,|\,\theta<T_x,\theta=k)  \,\geq \,S$ for every $k\geq 0$.
Therefore $\mathbb{E}_x(T_x\,|\, \theta<T_x)  \,\geq \,S$, and
\begin{align*}
    \mathbb{E}_x(T_x)  \;& = \;  \mathbb{E}_x(T_x\,|\,\theta<T_x)  \,P_x(\theta<T_x)  \,+\,  \mathbb{E}_x(T_x\,|\,\theta>T_x)  \,P_x(\theta>T_x) 
    \\
      & \geq \;  S \,\left(\frac{1}{2}\right)  \,+\, 0     \hspace{5mm}\text{(by Equation (\ref{eq.thetalesstau}))} ,
\end{align*}
from which it follows that 
\begin{equation}
   \label{eq.Sbound}
     S  \;\leq \;  2\,\mathbb{E}_x(T_x)    \,.
\end{equation}

We are almost done.  For $y\neq x$ we have
\begin{align}
   \nonumber
    \mathbb{E}_y(T_x \,|\, \theta<T_x)  \,& = \; \sum_{k=1}^{\infty} \mathbb{E}_y(T_x \,|\, \theta<T_x, \,\theta=k)  \,P_y( \theta=k\,|\, \theta<T_x)
    \\
    \nonumber
    & = \; \sum_{k=1}^{\infty}( k+1+S )  \,P_y( \theta=k\,|\, \theta<T_x)    \hspace{5mm}\text{(by Eq.\ (\ref{eq.Eytauxmarkov}))}
      \\
       \label{eq.Eytausecond}  
      & =  \;   \mathbb{E}_y(\theta\,|\, \theta<T_x)  \,+1+S \,.
\end{align}      
Finally, we have
\begin{align*}
          \mathbb{E}_y(T_x)  \;& =\;    
           \mathbb{E}_y(T_x\,|\,\theta>T_x)\,P_y(\theta>T_x) \,+\,  \mathbb{E}_y(T_x \,|\,\theta<T_x)\,P_y(\theta<T_x)   
          \\
            & \leq \;       
             \mathbb{E}_y(\theta \,|\,\theta>T_x)\,P_y(\theta>T_x)  \,+\,     \left( \mathbb{E}_y(\theta \,|\,\theta<T_x)   \,+\, 1+S \right) \,P_y(\theta<T_x)         
             \\
             & 
                \hspace{25mm}\text{(using the condition 
             $\{\theta>T_x\}$, as well as Eq.\ (\ref{eq.Eytausecond}))}
            \\
          &  =   \;  \mathbb{E}_y(\theta) \,+\,(1+S)\,P_y(\theta<T_x)    
          \\
          &  \leq \;  (n-1) \,+\,1 \,+\,  2\,\mathbb{E}_x(T_x)
                    \hspace{15mm}\text{(by Eqs.\ (\ref{eq.Ethetabound}) and (\ref{eq.Sbound}))}.   
\end{align*}            
This proves the claim, and the theorem follows.
\hfill  $\Box$            

\section{Applications}\label{sec:app}
In this subsection, we present various families of graphs in terms of Nordhaus-Gaddum problem.

\subsection{Regular graphs}

Let $G$ be a $k$-regular graph with $n$ vertices. Here we recall the Kirchhoff index of $G$: it is given by $\Kf(G) = \frac{1}{2}\mathbf{1}^TR\mathbf{1}$. Since $G$ is regular, we see from \eqref{eq:res} that
\begin{align*}
	\mathcal{K}(G) = \frac{k\Kf(G)}{n}.
\end{align*}
From \cite[Proposition~4]{palacios2010kirchhoff}, we have $\frac{n(n-1)}{2k}\leq \Kf(G)\leq \frac{3n^3}{k}$. Hence,
\begin{align}\label{bounds for regular}
	\frac{n-1}{2}\leq \mathcal{K}(G)\leq 3n^2.
\end{align}
Hence, $\mathcal{K}(G) = O(n^2)$. By Corollary~\ref{cor.regular}, $\min\{\mathcal{K}(G),\mathcal{K}(\overline{G})\} = O(n)$ and so $\max\{\mathcal{K}(G),\mathcal{K}(\overline{G})\} = O(n^2)$. Therefore,
$$\mathcal{K}(G)\mathcal{K}(\overline{G}) = O(n^3).$$


\subsubsection{Construction of regular graphs $G$ with $\mathcal{K}(G) = \Theta(n^2)$}
In this subsection, we will provide families of regular graphs for which the growth rate of Kemeny's constant is $\Theta(n^2)$. An obvious example is the $n$-cycle whose Kemeny's constant is given \cite{kim2022families} by $\frac{1}{6}(n^2-1)$. In addition to that, we will show two ways of constructing them.

Let $G$ be a $k$-regular graph $G$ with $n$ vertices, and $\mu_1,\mu_2,\dots,\mu_n$ be the eigenvalues of the adjacency matrix of $G$ in non-increasing order. Then $\mu_1=k$. Then Kemeny's constant can be obtained from \eqref{eq.Keigensum} as follows:
\begin{align}\label{Kemeny:spec reg}
	\mathcal{K}(G) = \sum_{i = 2}^{n}\frac{k}{k-\mu_i}.
\end{align}
If some of eigenvalues are close to $k$ so that they are enough to determine the order of Kemeny's constant, then it is unnecessary to understand the remaining eigenvalues. From this, we shall use the so-called \textit{equitable partition} (see \cite{godsil2001algebraic}) for the construction, which induces the so-called \textit{quotient matrix} which is a smaller matrix whose eigenvalues belong to the spectrum of the adjacency matrix of $G$.

Let $n_2\geq 2$, and $G_1,\dots,G_{n_2}$ be $k_1$-regular graphs with $n_1$ vertices. Given a $k_2$-regular graph $H$ with $n_2$ vertices, we define $G$ to be a graph from the disjoint union of $G_1,\dots,G_{n_2}$ by inserting edges as follows: for vertices $i$ and $j$ of $H$, if $i$ and $j$ are adjacent, then we add non-adjacent $n_1$ edges for which each edge joins a vertex of $G_i$ and a vertex of $G_j$ (that is, those $n_1$ edges form a matching in $G$); and if $i$ and $j$ are not adjacent, then there is no edge between $G_i$ and $G_j$. Note that $G$ is $(k_1+k_2)$-regular. Then $(V(G_1),\dots,V(G_{n_2}))$ is an equitable partition, and so the quotient matrix $Q$ is given by $$Q = k_1I +A(H)$$
where $A(H)$ is the adjacency matrix of $H$. Then the eigenvalues of $Q$ are given by $k_1+\theta_i$ for $1\leq i\leq n_2$ where $\theta_1,\dots,\theta_{n_2}$ are the eigenvalues of $A(H)$ in non-increasing order. Then $k_2 = \theta_1$. Hence,
\begin{align*}
	\mathcal{K}(G)>\sum_{i=2}^{n_2}\frac{k_1+k_2}{k_2-\theta_i} = \frac{k_1+k_2}{k_2}\sum_{i=2}^{n_2}\frac{k_2}{k_2-\theta_i} = \left(\frac{k_1}{k_2}+1\right)\mathcal{K}(H).
\end{align*}
Therefore, if $\mathcal{K}(H) = \Theta(n_2^2)$ with $n_2 = \Theta(n)$, then $\mathcal{K}(G) = \Theta(n^2)$ follows from \eqref{bounds for regular}.

Now we introduce a different construction, using Kemeny's constant of a graph with bridges in \cite{breen2022kemeny}. This generalizes the necklace graph in \cite[]{breen2023kemeny} (see Figure~\ref{Figure:necklace} for the construction of the necklace graph), and also it will be used in the next subsection. We recall some definition and notation.

\begin{definition}\cite{breen2022kemeny}
	Let $\cT$ be a tree on $d$ vertices where $V(\cT)=\{1,\dots,d\}$. Let $G_1,\dots,G_d$ be connected graphs. Let $G$ be a graph constructed as follows: the vertices $1,\dots,d$ are replaced by the graphs $G_1,\dots,G_d$, respectively; and if $\{i,j\}$ is an edge of $\cT$, then some vertex $v_i\in V(G_i)$ is chosen, and some vertex $v_j\in V(G_j)$ is chosen, and the two vertices are joined with an edge so that $\{v_i, v_j\}$ is a bridge in $G$. Then $G$ is said to be a \textit{chain of $G_1,\dots,G_d$ with respect to $\cT$}. We denote by $\mathcal{B}_G$ the set of the $(d-1)$ bridges, used in the construction of $G$, that correspond to the edges of $\cT$.
\end{definition}

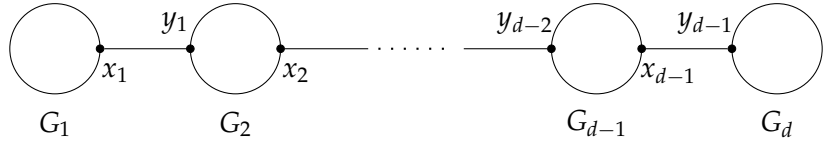
\begin{figure}[h!]
	\begin{center}
		\begin{tikzpicture}
			\tikzset{enclosed/.style={draw, circle, inner sep=0pt, minimum size=.10cm, fill=black}}
			\node[enclosed, xshift=0.6cm, label={below, xshift=.2cm : $x_1$}] (w1) at (-4.8,0) {};
			\node[enclosed, xshift=-0.6cm, label={above, xshift=-.2cm : $y_1$}] (v2) at (-2.4,0) {};
			\node[enclosed, xshift=0.6cm, label={below, xshift=.2cm : $x_2$}] (w2) at (-2.4,0) {};
			\node[] (v3) at (-.5,0) {}; 
			
			\node[] (wl-2) at (.5,0) {};
			\node[enclosed, xshift=-0.6cm, label={above, xshift=-.35cm : $y_{d-2}$}] (vl-1) at (2.4,0) {};
			\node[enclosed, xshift=0.6cm, label={below, xshift=.35cm : $x_{d-1}$}] (wl-1) at (2.4,0) {};
			\node[enclosed, xshift=-0.6cm, label={above, xshift=-.35cm : $y_{d-1}$}] (vl) at (4.8,0) {};

			\draw (-4.8,0) circle (0.6cm);
			\draw (-2.4,0) circle (0.6cm);
			\draw[thick, loosely dotted] (-.5,0)--(.5,0);
			\draw (2.4,0) circle (0.6cm);
			\draw (4.8,0) circle (0.6cm);
			
			\draw (w1) -- (v2); \draw (w2)--(v3);
			\draw (wl-2) -- (vl-1); \draw (wl-1)--(vl);
			
			\node[] at (-4.8,-1) {$G_{1}$};
			\node[] at (-2.4,-1) {$G_{2}$};
			\node[] at (2.4,-1) {$G_{d-1}$};
			\node[] at (4.8,-1) {$G_{d}$};
		\end{tikzpicture}
	\end{center}
	\caption{An illustration of a chain of $G_1,\dots,G_d$ with respect to the path on $d$ vertices. Here $\mathcal{B}_G$ consists of edges $\{x_1, y_1\},\dots, \{x_d, y_d\}$.}\label{Figure:chain}
\end{figure}

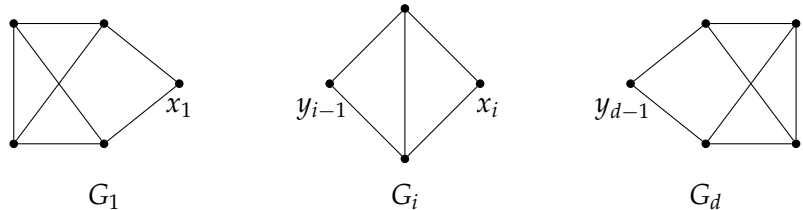
\begin{figure}[h!]
	\begin{center}
		\begin{tikzpicture}
			\tikzset{enclosed/.style={draw, circle, inner sep=0pt, minimum size=.10cm, fill=black}}
			\begin{scope}[xshift=0cm]
				\node[enclosed, label={left, yshift=0cm: }] (v_1) at (-1.2,.8) {};
				\node[enclosed, label={left, yshift=0cm:}] (v_2) at (-1.2,-.8) {};
				\node[enclosed, label={left, yshift=0cm: }] (v_3) at (0,.8) {};
				\node[enclosed, label={right, xshift=0cm: }] (v_4) at (0,-.8) {};
				\node[enclosed, label={below, xshift=0cm: $x_1$}] (v_5) at (1,0) {};
				
				\draw (v_1) -- (v_2);
				\draw (v_1) -- (v_4);
				\draw (v_1) -- (v_3);
				\draw (v_2) -- (v_3);
				\draw (v_2) -- (v_4);
				\draw (v_5) -- (v_3);
				\draw (v_5) -- (v_4);
				
				\node[] at (0,-1.5) {$G_{1}$};
			\end{scope}
			
			\begin{scope}[xshift=4cm]
				
				\node[enclosed, label={left, yshift=0cm: }] (v_1) at (0,1) {};
				\node[enclosed, label={left, yshift=0cm:}] (v_2) at (0,-1) {};
				\node[enclosed, label={below, xshift=-.1cm: $y_{i-1}$}] (v_3) at (-1,0) {};
				\node[enclosed, label={below, xshift=0.1cm: $x_{i}$}] (v_4) at (1,0) {};
				
				\draw (v_1) -- (v_2);
				\draw (v_1) -- (v_4);
				\draw (v_1) -- (v_3);
				\draw (v_2) -- (v_3);
				\draw (v_2) -- (v_4);
				
				\node[] at (0,-1.5) {$G_{i}$};
			\end{scope}
			
			\begin{scope}[xshift=8cm]
				\node[enclosed, label={left, yshift=0cm: }] (v_1) at (1.2,.8) {};
				\node[enclosed, label={left, yshift=0cm:}] (v_2) at (1.2,-.8) {};
				\node[enclosed, label={left, yshift=0cm: }] (v_3) at (0,.8) {};
				\node[enclosed, label={right, xshift=0cm: }] (v_4) at (0,-.8) {};
				\node[enclosed, label={below, xshift=-.1cm: $y_{d-1}$}] (v_5) at (-1,0) {};
				
				\draw (v_1) -- (v_2);
				\draw (v_1) -- (v_4);
				\draw (v_1) -- (v_3);
				\draw (v_2) -- (v_3);
				\draw (v_2) -- (v_4);
				\draw (v_5) -- (v_3);
				\draw (v_5) -- (v_4);
				
				\node[] at (0,-1.5) {$G_{d}$};
			\end{scope}
		\end{tikzpicture}
	\end{center}
	\caption{Taking these $G_i$, $x_i$, $y_i$ for $1\leq i\leq d$ in Figure~\ref{Figure:chain}, we can obtain the necklace graph in \cite{breen2023kemeny}.}\label{Figure:necklace}
\end{figure}

Let $G$ be a chain of connected graphs $G_1,\dots,G_d$ with respect to a tree $\cT$ on $d$ vertices (see Figure~\ref{Figure:chain} as an example when $\cT$ is a path). For each bridge $\{x, y\}\in\mathcal{B}_G$, we have exactly two components in $G\backslash \{x, y\}$, where $G\backslash \{x, y\}$ is the graph obtained from $G$ by removing the edge $\{x, y\}$. We use $W_x$ (resp. $W_y$) to denote the number of edges of the component with $x$ (resp. $y$) in $G\backslash \{x, y\}$. Let $\overline{W}_x=m_G-W_x$ and $\overline{W}_y=m_G-W_y$.

We then see from \cite[Theorem 3.16]{breen2022kemeny} that 
\begin{align*}
	\mathcal{K}(G) > \sum\limits_{\{x, y\} \in \mathcal{B}_G}\frac{(2\overline{W}_x-1)(2\overline{W}_y-1)}{2m_G}:=\Gamma(G).
\end{align*}

Let $k$ be an odd integer greater than $2$. Let $\mathcal{H}_1^k$ be the set of graphs with degree sequence $(k,\dots,k,k-1)$, and let $\mathcal{H}_2^k$ be the set of graphs with degree sequence $(k,\dots,k,k-1,k-1)$. 

Let $\cT$ be a path on $d$ vertices. Given $G_1,G_d\in \mathcal{H}_1$ and $G_2,\dots,G_{d-1}\in \mathcal{H}_2$, we let $G$ be a chain of connected graphs $G_1,\dots,G_d$ with respect to $\cT$ such that for each bridge $\{x_i, y_i\}$ in $\mathcal{B}_G$, $1\leq i\leq d$, $\mathrm{deg}(x_i) = \mathrm{deg}(y_i) = k$ (by properly choosing $x_i$'s and $y_i$'s as vertices with degree $k-1$ in $G_1,\dots,G_d$). Suppose that $n_1=|V(G_1)| = |V(G_d)|$ and $n_2=|V(G_2)|=\cdots =|V(G_{d-1})|$. Then
\begin{align*}
	2\overline{W}_{x_i} = (d-i-1)n_2k + n_1k+1\;\;\text{and}\;\;2\overline{W}_{y_i} = (i-1)n_2k + n_1k+1.
\end{align*}
We can find that
\begin{align*}\label{temp;eqn}
	\Gamma(G)=&~\frac{k(d-1)}{(2n_1+(d-2)n_2)}\left((n_1-n_2)^2+dn_2(n_1-n_2)+\frac{1}{6}(d^2+d)n_2^2\right)\\
			 =&~\frac{k(d-1)}{6(2n_1+(d-2)n_2)}\left(6n_1^2+6(d-2)n_1n_2+(d-3)(d-2)n_2^2\right)\\
            >&~\frac{k(d-1)}{6(2n_1+(d-2)n_2)}\left(4n_1^2+4(d-2)n_1n_2+(d-2)(d-2)n_2^2-(d-2)n_2^2\right)\\
            =&~\frac{k(d-1)(2n_1+(d-2)n_2)}{6}-\frac{k(d-1)}{6(2n_1+(d-2)n_2)}(d-2)n_2^2.
\end{align*}
Let $n=|V(G)|$. Since $n = 2n_1+(d-2)n_2$, it follows that
\begin{align*}
	\mathcal{K}(G)=\Omega(kdn).
\end{align*}
We see that the order of Kemeny's constant is independent of $n_1$ and $n_2$. Since $k<n_1$ and $k<n_2$, we have $k<\frac{n}{d}$. Therefore, if $kd = \Theta(n)$ then $\mathcal{K}(G)= \Theta(n^2)$. As an example, since the necklace graph has $k = 3$ and $d = \frac{n-2}{4}$, its Kemeny's constant is $\Theta(n^2)$.
	
\subsection{Barbell-type graphs}

It appears in \cite{breen2019computing} that the graph obtained from a path on $d$ vertices by appending a clique of size $d$ to each end-vertex attains the largest order of Kemeny's constant. Here we will show that it is enough to append a graph with sufficiently many edges to each end-vertex in order to obtain the same result.

Let $\cT$ be a path on $d$ vertices, and $G_1,\dots,G_d$ be connected graphs. Let $G$ be a chain of connected graphs $G_1,\dots,G_k$ with respect to $\cT$. Suppose that $n = |V(G)|$, $d=|V(G_1)| = |V(G_d)|$ and $1=|V(G_2)|=\cdots =|V(G_{d-1})|$. Assume that $d = \Theta(n)$, $m_{G_1}=\Theta(d^2)$ and $m_{G_d}=\Theta(d^2)$. For $i=1,\dots,d-1$, let $\{x_i, y_i\}$ be the bridge in $\mathcal{B}_G$ such that $x_i\in V(G_i)$ and $y_i\in V(G_{i+1})$. Then
\begin{align*}
	\overline{W}_{x_i} = m_{G_d}+d-i\;\;\text{and}\;\;\overline{W}_{y_i} = m_{G_1}+i.
\end{align*}
We can find that
\begin{align*}
	\sum\limits_{\{x, y\} \in \mathcal{B}_G}\frac{(2\overline{W}_x-1)(2\overline{W}_y-1)}{2m_G}>\sum_{i=1}^{d-1}\frac{4m_{G_1}m_{G_d}}{2(m_{G_1}+m_{G_d}+d-1)}.
\end{align*}
Hence, $\mathcal{K}(G) = \Theta(n^3)$. Since $\overline{G}$ has a vertex of degree $n-3$, we have $\mathcal{K}(\overline{G}) = O(n)$. Therefore,
$$\mathcal{K}(G)\mathcal{K}(\overline{G}) = \Theta(n^4).$$

\begin{remark}
	Taking $G_1$ and $G_{d}$ as the complete bipartite graph $K_{\lfloor\frac{d}{2}\rfloor,\lceil\frac{d}{2}\rceil}$, we can construct a bipartite graph $G$ with $\mathcal{K}(G)\mathcal{K}(\overline{G}) = \Theta(n^4)$.
\end{remark}

\subsection{Trees}

Let $\cT$ be a tree on $n$ vertices. Since there exists a vertex of degree $n-2$ in the complement of $\cT$, we have $\mathcal{K}(\overline{\cT})= O(n)$. Now we consider the order of $\mathcal{K}(\cT)$. The \textit{Wiener index}, denoted as $W(\cT)$, of $\cT$ is the sum of distances for all pairs of two distinct vertices. The following appears in \cite{jang2023kemeny}:
\begin{align*}
	\mathcal{K}(\cT) = \frac{2W(\cT)}{n-1}-n+\frac{1}{2}.
\end{align*}
Hence, trees with the maximum Wiener index attains the maximum Kemeny's constant. It is known in \cite{entringer1976distance} that when $\cT$ is the path, the maximum of Wiener index is attained, which is $O(n^3)$. Hence, $\mathcal{K}(\cT) = O(n^2)$. Therefore,
$$\mathcal{K}(\cT)\mathcal{K}(\overline{\cT}) = O(n^3).$$

\subsection{Strongly regular graphs}
In this subsection, we refer the reader to \cite[Section 10]{godsil2001algebraic} for the comprehensive definition and all relevant properties related to strongly regular graphs that we shall use here.

Let $G$ be a connected strongly regular graph with parameter $(n,k;a,c)$. Then the spectrum of $G$ is given by
$$\left\{k, \left(\frac{a-c+\sqrt{(a-c)^2+4(k-c)}}{2}\right)^{m_1},\left(\frac{a-c-\sqrt{(a-c)^2+4(k-c)}}{2}\right)^{m_2}\right\}$$
where $m_1 = \frac{1}{2}\left(n-1-\frac{2k+(n-1)(a-c)}{\sqrt{(a-c)^2+4(k-c)}}\right)$ and $m_2 = \frac{1}{2}\left(n-1+\frac{2k+(n-1)(a-c)}{\sqrt{(a-c)^2+4(k-c)}}\right)$. One can find from \eqref{Kemeny:spec reg} that
\begin{align*}
	\mathcal{K}(G) = \frac{(n-2)k^2-(n-1)(a-c)k}{k^2-(a-c+1)k +c}.
\end{align*}
It is known that $k^2 = nc+(a-c)k+k-c$. Hence
\begin{align*}
	\mathcal{K}(G) = \frac{(n-2)(nc+(a-c)k+k-c)+(n-1)(c-a)k}{nc} = O\left(n\right).
\end{align*}
The complement $\overline{G}$ of $G$ is also a strongly regular graph with parameter $(n,n-k-1;n-2-2k+c,n-2k+a)$. It is also known that $\overline{G}$ is disconnected if and only if $\overline{G}$ is $m$ copies of a complete graph for some $m>1$. Therefore, if $\overline{G}$ is not $m$ copies of a complete graph, then
$$\mathcal{K}(G)\mathcal{K}(\overline{G}) = \Theta(n^2).$$

\subsection{Distance regular graphs with growing diameter}

Considering Remark \ref{remark: n^3} and Proposition~\ref{prop:fixed diameter}, one might expect that the order of Kemeny's constants is higher as the diameter of the graphs grows. For instance,  the cycle of length $n$ has diameter $\lfloor \frac{n}{2}\rfloor$ and its Kemeny's constant is $\Theta(n^2)$. However, in this subsection, we introduce certain families of examples that display contrasting behavior.

We consider distance regular graphs with classical parameters and we refer the reader to \cite{brouwer2012distance} for a comprehensive monograph on distance regular graphs and to \cite{jurivsic2017restrictions} for spectral properties of distance regular graphs with classical parameters. Note that these references use $v$ for the number of vertices while we denote it by $n$. 

Let $G$ be a distance regular graph with classical parameters $(d,b,\alpha,\beta)$ where $d$ is the diameter of $G$. Let $k$ be the degree of a vertex. It is known that there are $d+1$ distinct eigenvalues $\theta_0,\dots,\theta_d$. We define $[i]_b = 1+b+\cdots+b^{i-1}$ for $i\geq 1$, and $[0]_b=0$. It is found in \cite[Lemma 2]{jurivsic2017restrictions} that for $0\leq i\leq d$,
\begin{align*}
    \theta_i = [d-i]_b(\beta -\alpha [i]_b)-[i]_b.
\end{align*}
If $b>0$ then the eigenvalues are given in decreasing order. The multiplicity of $\theta_i$ is given by
\begin{align*}
    m_i = \frac{(1+\alpha[d-2i]_b+b^{d-2i}\beta)\prod_{j=0}^{i-1}\alpha_j}{(1+\alpha[d]_b+b^d\beta)\prod_{j=1}^i\beta_j},
\end{align*}
where
\begin{equation*}
    \begin{aligned}
        \alpha_j =&~ b[d-j]_b(\beta-\alpha[j]_b)(1+\alpha [d-j]_b+b^{d-j}\beta), && (0\leq j\leq d-1);\\
    \beta_j =&~ [j]_b(\beta-\alpha[j]_b+b^j)(1+\alpha[d-j]_b), && (1\leq j\leq d).
    \end{aligned}
\end{equation*}

Suppose that $b>1$ and $(b-1)\beta+\alpha>0 $. We see that $k=\theta_0=\beta[d]_b$ and $\theta_1=[d-1]_b(\beta-\alpha)-1$. Then,
$$1- \frac{\theta_1}{k} = 1 - \frac{[d-1]_b(\beta-\alpha)-1}{[d]_b\beta}\rightarrow \frac{(b-1)\beta+\alpha}{b\beta}\quad \quad \text{as $d\rightarrow\infty$.}$$
Therefore, $\frac{k}{k-\theta_1}=O(1)$ and so 
$$\mathcal{K}(G) \leq \frac{(n-1)k}{k-\theta_1} = O(n).$$

\begin{example}
    Kemeny's constants for families (C2), (C3), (C3a), (C4), (C4a), (C10), (C11), and (C11a) in \cite[Tables 6.1 and 6.2]{brouwer2012distance} are $O(n)$ while their diameters grow as $n$ increases.
\end{example}

An example of a family with $b=1$ is the set of Hamming graphs (see \cite{brouwer2012distance}). Then $n=q^d$, $b=1$, $\alpha=0$, and $\beta = q$. Using the formulae for eigenvalues and their multiplicities, and the fact that $\frac{1}{j}\leq \frac{2}{j+1}$ for $j\geq 1$, we can find that 
\begin{align*}
    \mathcal{K}(G) = &~\frac{d}{q}\sum_{j=1}^d \binom{d}{j}\frac{(q-1)^{j+1}}{j} \\
    \leq&~  \frac{2d}{q}\sum_{j=1}^d \binom{d}{j}\frac{(q-1)^{j+1}}{j+1} = \frac{2d}{q}\left(\frac{1}{d+1}q^{d+1}-(q-1)\right) = O(n).
\end{align*}

\section{Discussion} \label{sec:concl}
Let $G$ be a connected graph. We have seen in Remark~\ref{remark: n^3} that $\mathcal{K}(G) = O(n^3)$. Trivially, $\mathcal{K}(G)+\mathcal{K}(\overline{G}) = O(n^3)$. Consequently, we have examined $\mathcal{K}(G)\mathcal{K}(\overline{G})$ for the Nordhaus-Gaddum Problem in relation to Kemeny's constant. We have proved that when maximum degree is $n-\Omega(n)$, or when it is $n-O(1)$, we have $\mathcal{K}(G)\mathcal{K}(\overline{G}) = O(n^4)$. 
Interchanging the roles of $G$ and $\overline{G}$, we see also that 
$\mathcal{K}(G)\mathcal{K}(\overline{G}) = O(n^4)$ if the minimum
degree is either $O(1)$ or $\Omega(n)$.
To completely resolve the issue, future work should consider graphs in which the maximum degree is $n-o(n)$ and $n-\omega(1)$ \textit{and} the minimum degree is $\omega(1)$ and $o(n)$.
Additionally, we have presented calculations for various families of related graphs.

The above-mentioned $O(n^3)$ bound shows that 
$\mathcal{K}(G)\mathcal{K}(\overline{G})=O(n^6)$.  
Although we do not 
expect that this bound to be optimal, we do not have a better 
bound for all graphs.  The final sentence of Corollary
\ref{cor-deltaplusbound} implies that 
$\mathcal{K}(G)\mathcal{K}(\overline{G})=O(n^5)$ whenever
$\Delta(G)+\delta(G) \neq n-1$ (if $\Delta(G)+\delta(G)<n-1$,
then $\Delta(\overline{G})+\delta(\overline{G})\geq n$).  However, we
do not even have examples to show that the worst case of 
$\mathcal{K}(G)\mathcal{K}(\overline{G})$ is not $O(n^4)$.

An alternative approach to the Nordhaus-Gaddum problem can be based on graph diameter rather than maximum degree. From Proposition~\ref{prop:fixed diameter}, if $G$ is of diameter $3$, then $\mathcal{K}(G)=O(n^2)$. Since the complement of a graph of diameter more than $3$ has diameter $2$, if we understand the order of Kemeny's constant of a graph with diameter $2$, then we can address the problem.

\begin{conjecture}\label{conjecture1}
	Let $G$ be a graph with  $\diam(G)=2$. Then $\mathcal{K}(G) = O(n)$. 
\end{conjecture}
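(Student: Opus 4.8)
The plan is to argue from the spectral and resistance formulas for $\mathcal{K}(G)$ and to exploit the structural rigidity forced by $\diam(G)=2$. First I would clear away the easy regimes. If $\overline{G}$ is disconnected then $G$ is a join and Theorem~\ref{thm.joinbound} gives $\mathcal{K}(G)\le 3n$; if $\Delta(G)=n-O(1)$ then the corollaries of Theorem~\ref{thm:upper bound d f} give $\mathcal{K}(G)=O(n)$; and if $\Delta(G)+\delta(G)\ge n$ with $\delta(G)=O(1)$ then Corollary~\ref{cor-deltaplusbound} gives $\mathcal{K}(G)=O(n)$. Moreover $\diam(G)=2$ already forces $\Delta(G)\ge\sqrt{n-1}$ and $\Delta(G)\delta(G)\ge n-1$ (a maximum-degree vertex reaches at most $\Delta^2$ other vertices within distance $2$; the non-neighbours of a minimum-degree vertex must all be adjacent to one of its $\delta$ neighbours, each of degree at most $\Delta$). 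So the problem reduces to graphs whose degrees live in a ``middle'' band, and the real content is to rule out $\mathcal{K}(G)$ of order $n^2$ there.

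Next I would set up a dichotomy through the bottleneck ratio. By \eqref{eq.Kemjerrbd}, $\mathcal{K}(G)\le 2n/\Phi(G)^2$, so if $\Phi(G)\ge c_0$ for an absolute constant we are done. Otherwise fix a witnessing set $S$ with $\operatorname{vol}(S)\le |E(G)|$ and $|[S,S^c]_G|<\epsilon\operatorname{vol}(S)$; the key observation is that $\diam(G)=2$ makes a sparse cut necessarily very unbalanced. Indeed, if some $w\in S^c$ has no neighbour in $S$, then for each $u\in S$ a common neighbour of $u$ and $w$ lies in $N(w)\subseteq S^c$ and yields a cut edge incident to $u$, so $|[S,S^c]_G|\ge|S|$; and if instead every vertex of $S^c$ has a neighbour in $S$, then $|[S,S^c]_G|\ge|S^c|$. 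Either way one side has only $O(\epsilon m)$ vertices. The aim would then be to bootstrap: feeding the sparsity of the cut back into the diameter-$2$ condition restricted to the larger side, one should be able to force the existence either of a vertex of degree $n-O(1)$ or of a join-like decomposition, contradicting the regime under consideration and hence yielding $\Phi(G)=\Omega(1)$ there.

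A complementary and possibly more robust route uses the resistance formula $\mathcal{K}(G)=\tfrac{1}{4m}\mathbf{d}^{T}R\mathbf{d}$ from \eqref{eq:res}: it is enough to show that the degree-weighted mean effective resistance is $O(n/m)$, equivalently $\sum_{i,j}d_id_jr_{i,j}=O(mn)$. The relevant feature of diameter $2$ is that for any two vertices $i$ and $j$ there are essentially $d_id_j$ walks of length at most $4$ joining them (take any $v\in N(i)$ and $w\in N(j)$, and use $\dist(v,w)\le2$). I would build a unit $i$--$j$ flow spread over all these routes and bound its energy, hence $r_{i,j}$, by controlling the congestion at the intermediate vertices; summing against the degree weights should give the desired $O(n)$ bound, in the spirit of the flow/first-passage arguments of Sections~\ref{sec:n-on} and~\ref{sec:join}. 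This would also complete the ``diameter'' approach to the Nordhaus--Gaddum problem sketched in the Discussion, since the complement of a graph of diameter at least $3$ has diameter $2$.

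The main obstacle is precisely the step that converts ``$\diam(G)=2$ $\Rightarrow$ sparse cuts are unbalanced'' into a lower bound on $\Phi(G)$ of the right order, or, in the resistance approach, the worst-case bound on the congestion of the spreading flow at intermediate vertices. The crude tools available give only $O(n^2)$: the chain $\mathcal{K}(G)\le(n-1)/(1-\lambda_2)$ from \eqref{eq.Kembounds1} together with $1-\lambda_2\ge\tfrac12\Phi(G)^2$ from \eqref{eq.jerrum}, or the pointwise estimate $r_{i,j}\le 2/|N(i)\cap N(j)|$, both lose a factor of $n$ exactly because they ignore the long parallel paths that a diameter-$2$ graph contains in abundance, and recovering that factor uniformly over all degree sequences — in particular in the sparse regime where $\Delta(G)$ is only of order $\sqrt{n}$ — is where I expect the genuine difficulty to lie. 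It is conceivable that an additional hypothesis (regularity, or $\delta(G)=\Omega(1)$ together with $\delta(G)=o(n)$) is needed to push the argument through in full generality, which would still be enough to settle the remaining cases of the Nordhaus--Gaddum problem identified at the end of the paper.
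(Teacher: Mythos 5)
You should be aware that the statement you are trying to prove is posed in the paper as an open conjecture (Conjecture~\ref{conjecture1} in Section~\ref{sec:concl}); the paper gives no proof and only notes that the claim holds for a few special families (joins, threshold graphs, strongly regular graphs). So there is no ``paper proof'' to match, and what you have written is a research plan rather than a proof. Your preliminary reductions are all sound: the join case via Theorem~\ref{thm.joinbound}, the case $\Delta(G)=n-O(1)$ via the corollaries of Theorem~\ref{thm:upper bound d f}, the expander case via \eqref{eq.Kemjerrbd}, and the elementary consequences of $\diam(G)=2$ (namely $\Delta(G)\geq\sqrt{n-1}$, $\Delta(G)\,\delta(G)\geq n-1$, and the fact that a cut with $|[S,S^c]_G|<\epsilon\,\mathrm{vol}(S)$ forces $\min\{|S|,|S^c|\}\leq|[S,S^c]_G|$). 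These are correct and would be a reasonable first page of an attack on the problem.

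The genuine gap is exactly the one you identify yourself, and it is not a small one: neither the ``bootstrap'' from an unbalanced sparse cut to a vertex of degree $n-O(1)$ or a join-like decomposition, nor the congestion bound for the spreading flow in the resistance approach, is actually carried out, and there is no indication of how to make either work uniformly over the middle band of degree sequences (in particular when $\Delta(G)=\Theta(\sqrt{n})$ and the graph is irregular). The unbalancedness you derive is $\min\{|S|,|S^c|\}=O(\epsilon m)$, which is weak when $m=\omega(n)$, and the pointwise bound $r_{i,j}\leq 2/|N(i)\cap N(j)|$ loses precisely the factor of $n$ you need; closing either route is the open content of the conjecture. As written, the proposal establishes the conjecture only in the regimes already covered by the paper's theorems, so it does not constitute a proof of the statement.
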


We can see from our findings that Conjecture~\ref{conjecture1} holds for several families of graphs with diameter $2$; for instance, threshold graphs, join of graphs, and strongly regular graphs.

Finally, we have not identified a family of graphs $G$ such that $\mathcal{K}(G) = \omega(n)$ and $\mathcal{K}(\overline{G})=\omega(n)$. Such a family would be interesting to understand the structure of $G$ together with $\overline{G}$ to have higher order of Kemeny's constant. If Conjecture~\ref{conjecture1} is proved to be true, then that family should be found among graphs with diameter $3$ whose complements also have diameter $3$.

\section*{Acknowledgement}
The authors are grateful to Jane Breen at Ontario Tech University and Steve Butler at Iowa State University for constructive conversations when this project was in its initial phase. 

\section*{Funding}
Ada Chan gratefully acknowledges the support of the NSERC Grant No. RGPIN-2021-03609. S. Kim is supported in part by funding from the Fields Institute for Research in Mathematical Sciences and from the Natural Sciences and Engineering Research Council of Canada (NSERC). S. Kirkland is supported by NSERC grant number RGPIN–2019–05408. 
N. Madras is supported in part by NSERC Grant No.\ RGPIN-2020-06124.


\begin{bibdiv}
	\begin{biblist}
		
		\bib{aouchiche2013survey}{article}{
			author={Aouchiche, Mustapha},
			author={Hansen, Pierre},
			title={A survey of {N}ordhaus-{G}addum type relations},
			date={2013},
			journal={Discrete Applied Mathematics},
			volume={161},
			number={4--5},
			pages={466\ndash 546},
		}
		
		\bib{bapat2010graphs}{book}{
			author={Bapat, Ravindra~B.},
			title={Graphs and {M}atrices},
			publisher={Springer},
			date={2010},
			volume={27},
		}
		
		\bib{breen2019computing}{article}{
			author={Breen, Jane},
			author={Butler, Steve},
			author={Day, Nicklas},
			author={DeArmond, Colt},
			author={Lorenzen, Kate},
			author={Qian, Haoyang},
			author={Riesen, Jacob},
			title={Computing {K}emeny's constant for a barbell graph},
			date={2019},
			journal={Electronic Journal of Linear Algebra},
			volume={35},
			pages={583\ndash 598},
		}
		
		\bib{breen2022kemeny}{article}{
			author={Breen, Jane},
			author={Crisostomi, Emanuele},
			author={Kim, Sooyeong},
			title={Kemeny’s constant for a graph with bridges},
			date={2022},
			journal={Discrete Applied Mathematics},
			volume={322},
			pages={20\ndash 35},
		}
		
		\bib{breen2023kemeny}{article}{
			author={Breen, Jane},
			author={Faught, Nolan},
			author={Glover, Cory},
			author={Kempton, Mark},
			author={Knudson, Adam},
			author={Oveson, Alice},
			title={Kemeny's constant for nonbacktracking random walks},
			date={2023},
			journal={Random Structures \& Algorithms},
			volume={63},
			pages={343\ndash 363},
		}
		
		\bib{brouwer2012distance}{book}{
			author={Brouwer, Andries~E.},
			author={Haemers, Willem~H.},
			title={Distance-{R}egular {G}raphs},
			publisher={Springer},
			date={2012},
		}
		
		\bib{chandra1989electrical}{inproceedings}{
			author={Chandra, Ashok~K.},
			author={Raghavan, Prabhakar},
			author={Ruzzo, Walter~L.},
			author={Smolensky, Roman},
			title={The electrical resistance of a graph captures its commute and
				cover times},
			date={1989},
			booktitle={Proceedings of the {T}wenty-{F}irst {A}nnual {ACM} {S}ymposium on
				{T}heory of {C}omputing},
			pages={574\ndash 586},
		}
		
		\bib{chen2007resistance}{article}{
			author={Chen, Haiyan},
			author={Zhang, Fuji},
			title={Resistance distance and the normalized {L}aplacian spectrum},
			date={2007},
			journal={Discrete Applied Mathematics},
			volume={155},
			number={5},
			pages={654\ndash 661},
		}
		
		\bib{das2016nordhaus}{article}{
			author={Das, Kinkar~Ch},
			author={Yang, Yujun},
			author={Xu, Kexiang},
			title={Nordhaus-{G}addum-type results for resistance distance-based
				graph invariants},
			date={2016},
			journal={Discussiones Mathematicae Graph Theory},
			volume={36},
			number={3},
			pages={695\ndash 707},
		}
		
		\bib{entringer1976distance}{article}{
			author={Entringer, Roger~C.},
			author={Jackson, Douglas~E.},
			author={Snyder, D.A.},
			title={Distance in graphs},
			date={1976},
			journal={Czechoslovak Mathematical Journal},
			volume={26},
			number={2},
			pages={283\ndash 296},
		}
		
		\bib{faught2023nordhaus}{article}{
			author={Faught, J.~Nolan},
			author={Kempton, Mark},
			author={Knudson, Adam},
			title={A {N}ordhaus-{G}addum type problem for the normalized {L}aplacian
				spectrum and graph {C}heeger constant},
			date={2023},
			journal={arXiv preprint arXiv:2304.01979},
		}
		
		\bib{GhaL}{inproceedings}{
			author={Ghayoori, Armin},
			author={Leon-Garcia, Alberto},
			title={Robust network design},
			date={2013},
			booktitle={2013 {IEEE} {I}nternational {C}onference on {C}ommunication
				({ICC}), {B}udapest},
			pages={2409\ndash 2414},
		}
		
		\bib{godsil2001algebraic}{book}{
			author={Godsil, Chris},
			author={Royle, Gordon~F.},
			title={Algebraic {G}raph {T}heory},
			publisher={Springer Science \& Business Media},
			date={2001},
			volume={207},
		}
		
		\bib{jang2023kemeny}{article}{
			author={Jang, Jihyeug},
			author={Kim, Sooyeong},
			author={Song, Minho},
			title={Kemeny's constant and {W}iener index on trees},
			date={2023},
			journal={Linear Algebra and its Applications},
		}
		
		\bib{JeSi}{article}{
			author={Jerrum, Mark},
			author={Sinclair, Alistair},
			title={Approximating the permanent},
			date={1989},
			journal={SIAM Journal on Computing},
			volume={18},
			pages={1149\ndash 1178},
		}
		
		\bib{jurivsic2017restrictions}{article}{
			author={Juri{\v{s}}i{\'c}, Aleksandar},
			author={Vidali, Jano{\v{s}}},
			title={Restrictions on classical distance-regular graphs},
			date={2017},
			journal={Journal of Algebraic Combinatorics},
			volume={46},
			pages={571\ndash 588},
		}
		
		\bib{kemeny1960finite}{book}{
			author={Kemeny, John},
			author={Snell, J.~Laurie},
			title={Finite {M}arkov {C}hains},
			publisher={Springer-Verlag, New York-Heidelberg},
			date={1976},
		}
		
		\bib{kim2022families}{article}{
			author={Kim, Sooyeong},
			title={Families of graphs with twin pendent paths and the {B}raess
				edge},
			date={2022},
			journal={Electronic Journal of Linear Algebra},
			volume={38},
			pages={9\ndash 31},
		}
		
		\bib{kirkland2016kemeny}{article}{
			author={Kirkland, Steve},
			author={Zeng, Ze},
			title={Kemeny's constant and an analogue of {B}raess' paradox for
				trees},
			date={2016},
			journal={Electronic Journal of Linear Algebra},
			volume={31},
			pages={444\ndash 464},
		}
		
		\bib{LaSo}{article}{
			author={Lawler, Gregory~F.},
			author={Sokal, Alan~D.},
			title={Bounds on the $l^2$ spectrum for {M}arkov chains and {M}arkov
				processes: {A} generalization of {C}heeger's inequality},
			date={1988},
			journal={Transactions of the American Mathematical Society},
			volume={309},
			pages={557\ndash 580},
		}
		
		\bib{levene2002kemeny}{article}{
			author={Levene, Mark},
			author={Loizou, George},
			title={Kemeny's constant and the random surfer},
			date={2002},
			journal={American Mathematical Monthly},
			volume={109},
			number={8},
			pages={741\ndash 745},
		}
		
		\bib{LPW}{book}{
			author={Levin, David~A.},
			author={Peres, Yuval},
			author={Wilmer, Elizabeth~L.},
			title={Markov {C}hains and {M}ixing {T}imes},
			publisher={American Mathematical Society},
			date={2009},
		}
		
		\bib{MooI}{article}{
			author={Moosavi, Vahid},
			author={Isacchini, Giulio},
			title={A {M}arkovian model of evolving world input-output network},
			date={2017},
			journal={PLoS ONE},
			volume={12},
			pages={e0186746},
		}
		
		\bib{nordhaus1956complementary}{article}{
			author={Nordhaus, Edward~A.},
			author={Gaddum, Jerry~W.},
			title={On complementary graphs},
			date={1956},
			journal={American Mathematical Monthly},
			volume={63},
			number={3},
			pages={175\ndash 177},
		}
		
		\bib{palacios2010kirchhoff}{article}{
			author={Palacios, Jos{\'e}~Luis},
			title={On the {K}irchhoff index of regular graphs},
			date={2010},
			journal={International Journal of Quantum Chemistry},
			volume={110},
			number={7},
			pages={1307\ndash 1309},
		}
		
		\bib{shapiro1987electrical}{article}{
			author={Shapiro, Louis~W.},
			title={An electrical lemma},
			date={1987},
			journal={Mathematics Magazine},
			volume={60},
			number={1},
			pages={36\ndash 38},
		}
		
		\bib{yang2011new}{article}{
			author={Yang, Yujun},
			author={Zhang, Heping},
			author={Klein, Douglas~J.},
			title={New {N}ordhaus-{G}addum-type results for the {K}irchhoff index},
			date={2011},
			journal={Journal of Mathematical Chemistry},
			volume={49},
			pages={1587\ndash 1598},
		}
		
		\bib{zhou2008note}{article}{
			author={Zhou, Bo},
			author={Trinajsti{\'c}, Nenad},
			title={A note on {K}irchhoff index},
			date={2008},
			journal={Chemical Physics Letters},
			volume={455},
			number={1--3},
			pages={120\ndash 123},
		}
		
	\end{biblist}
\end{bibdiv}

\end{document}